\newtheorem*{rep@theorem}{\rep@title}
\newcommand{\newreptheorem}[2]{%
\newenvironment{rep#1}[1]{%
 \def\rep@title{#2 \ref{##1}}%
 \begin{rep@theorem}}%
 {\end{rep@theorem}}}
\newtheorem{thm}[equation]{Theorem}
\newtheorem{lem}[equation]{Lemma}
\newtheorem{prop}[equation]{Proposition}
\newtheorem{cor}[equation]{Corollary}
\newtheorem*{thm*}{Theorem}
\theoremstyle{definition}
\newtheorem{exam}[equation]{Example}
\newtheorem{que}{Question}
\newcommand\mb\mathbb
\newcommand\mf\mathfrak
\newcommand\mr\mathrm
\newcommand{\C}{\mathbb{C}}
\newcommand{\R}{\mathbb{R}}
\newcommand{\Z}{\mathbb{Z}}
\renewcommand\epsilon\varepsilon
\renewcommand\phi\varphi
\renewcommand\rho\varrho
\renewcommand\tilde\widetilde
\renewcommand\hat\widehat
\renewcommand\bar\overline
\DeclareMathOperator{\SL}{SL}
\DeclareMathOperator{\SO}{SO}
\DeclareMathOperator{\SU}{SU}
\DeclareMathOperator{\Sp}{Sp}
\DeclareMathOperator{\PSL}{PSL}
\DeclareMathOperator{\Alt}{Alt}
\newcommand{\WA}{\mr{WA}}
\renewcommand{\WA}{{}}
\newcommand{\dd}{\mr d}
\numberwithin{equation}{section}
\begin{document}
\selectlanguage{english} 

\begin{abstract}
We completely characterize connected Lie groups all of whose countable subgroups are weakly amenable.
We also provide a characterization of connected semisimple Lie groups that are weakly amenable.
Finally, we show that a connected Lie group is weakly amenable if the group is weakly amenable as a discrete group.
\end{abstract}


\title{
Weak amenability of Lie groups made discrete
}
\author{S{\o}ren Knudby}
\address{Mathematisches Institut der WWU M\"unster,
\newline Einsteinstra\ss{}e 62, 48149 M\"unster, Germany.}
\thanks{Supported by the Deutsche Forschungsgemeinschaft through the Collaborative Research Centre (SFB 878).}
\date{\today}
\maketitle

\section{Statement of the results}
Weak amenability for locally compact groups was introduced by Cowling and Haagerup in \cite{MR996553}. The property has proven useful as a tool in operator algebras going back to Haagerup's result on the free groups \cite{MR520930}, results on lattices on simple Lie groups and their group von Neumann algebras \cite{MR996553,MR3476201}, and more recently in several results on Cartan rigidity in the theory of von Neumann algebras (see e.g. \cite{MR3179609,MR3259044}). Due to its many applications in operator algebras, the study of weak amenability, especially for discrete groups, is important.

A locally compact group $G$ is weakly amenable if the constant function 1 on $G$ can be approximated uniformly on compact subsets by compactly supported Herz-Schur multipliers, uniformly bounded in norm (see Section~\ref{sec:prelim} for details). The optimal uniform norm bound is the \emph{Cowling--Haagerup constant} (or the \emph{weak amenability constant}), denoted here $\Lambda_\WA(G)$. 

By now, weak amenability is quite well studied, especially in the setting of connected Lie groups. The combined work of \cite{MR748862,MR996553,MR784292,MR1245415,MR1418350,MR3476201,MR1079871} characterizes weak amenability for simple Lie groups. For partial results in the non-simple case, we refer to \cite{MR2132866,MR3542782}. We record the simple case here.

\begin{thm}[\cite{MR748862,MR996553,MR784292,MR1245415,MR1418350,MR3476201,MR1079871}]\label{thm:simple}
A connected simple Lie group $G$ is weakly amenable if and only if the real rank of $G$ is zero or one. In that case, the weak amenability constant is
\begin{linenomath*}\begin{align}\label{eq:WA}
\Lambda_{\WA}(G) = \begin{cases}
1 & \text { when } G \text{ has real rank zero}, \\
1 & \text{ when } G\approx\SO(1,n),\ n\geq 2, \\
1 & \text{ when } G\approx\SU(1,n),\ n\geq 2, \\
2n-1 & \text{ when } G\approx\Sp(1,n),\ n\geq 2, \\
21 & \text{ when } G\approx \mr F_{4(-20)}. \\
\end{cases}
\end{align}\end{linenomath*}
Above, $G \approx H$ means that $G$ is locally isomorphic to $H$.
\end{thm}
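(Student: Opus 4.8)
The plan is a case analysis on the real rank of $G$, which is $0$, $1$, or at least $2$; each case draws on a different part of the literature, and the work is mainly in assembling these inputs. In real rank zero, $G$ is compact, hence amenable, and for an amenable locally compact group the constant function $1$ is the uniform-on-compacta limit of compactly supported normalized continuous positive-definite functions, each of which is a completely bounded Herz--Schur multiplier of norm $1$; so $\Lambda_{\WA}(G)=1$, which is the first line of \eqref{eq:WA}.

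\textbf{Real rank one.} Up to local isomorphism the connected simple Lie groups of real rank one are exactly $\SO(1,n)$ $(n\ge2)$, $\SU(1,n)$ $(n\ge2)$, $\Sp(1,n)$ $(n\ge2)$ and $\mr F_{4(-20)}$ --- the isometry groups of real, complex and quaternionic hyperbolic space and of the Cayley hyperbolic plane --- together with the low-rank coincidences $\SU(1,1)\approx\SO(1,2)$, $\SO(1,3)\approx\SL(2,\C)$ and $\Sp(1,1)\approx\SO(1,4)$, which is why $n=1$ need not be listed and why the displayed formula is consistent at $n=1$. I would split further according to whether $G$ is a-T-menable or has Kazhdan's property (T). For the a-T-menable groups $\SO(1,n)$ and $\SU(1,n)$, de Canni\`ere--Haagerup \cite{MR748862} construct, from explicit positive-definite radial functions on hyperbolic space, a net of compactly supported completely bounded multipliers converging to $1$ uniformly on compacta with norms tending to $1$; hence $\Lambda_{\WA}(G)=1$. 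For $\Sp(1,n)$ and $\mr F_{4(-20)}$ no such positive-definite net exists, since these groups have property (T); one instead passes to $K$-bi-invariant (``radial'') multipliers by averaging over left and right translations by the maximal compact subgroup $K$ --- which does not increase the completely bounded norm and preserves convergence to $1$ --- and for radial multipliers the completely bounded norm can be computed through a spherical, Abel/Jacobi-type transform on the rank-one group. De Canni\`ere--Haagerup, Cowling and Cowling--Haagerup \cite{MR748862,MR784292,MR1079871,MR996553} build a radial net converging to $1$ with norms tending to $2n-1$ (resp.\ $21$), and prove the matching lower bounds $\Lambda_{\WA}(\Sp(1,n))\ge 2n-1$ and $\Lambda_{\WA}(\mr F_{4(-20)})\ge 21$ by a sharp estimate of that transform, using the explicit spherical functions of these groups together with Kunze--Stein-type estimates; for $\SO$ and $\SU$ the trivial bound $\Lambda_{\WA}\ge 1$ suffices. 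This settles the remaining entries of \eqref{eq:WA}.

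\textbf{Real rank at least two.} Here the aim is $\Lambda_{\WA}(G)=\infty$. I would use two permanence properties of $\Lambda_{\WA}$ --- it does not increase on passing to a closed subgroup, and (as one checks directly for the groups involved) it is a local-isomorphism invariant --- together with the standard structure-theoretic fact (also behind the proof that such $G$ have property (T)) that every connected simple Lie group of real rank $\ge2$ contains a closed subgroup locally isomorphic to $\SL(2,\R)\ltimes\R^n$ for some $n\ge2$ with $\SL(2,\R)$ acting nontrivially, or to $\SL(3,\R)$ (for a restricted root system with an $A_2$ component one may take $\SL(3,\R)$ directly, and $\SL(3,\R)$ itself contains $\SL(2,\R)\ltimes\R^2$ inside a maximal parabolic). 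The deep input is then the theorem of Haagerup and Dorofaeff \cite{MR3476201,MR1245415,MR1418350}: for $n\ge2$ the group $\SL(2,\R)\ltimes\R^n$ has no norm-bounded approximate identity of compactly supported Herz--Schur multipliers --- equivalently, $\SL(3,\Z)$ (hence $\SL(3,\R)$) is not weakly amenable --- so $\Lambda_{\WA}=\infty$ there. Combining the permanence properties with the structural fact gives $\Lambda_{\WA}(G)=\infty$.

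\textbf{The main obstacle.} All of the above is soft except for two sharp harmonic-analytic statements, which is where essentially all the effort lies: (i) the Haagerup--Dorofaeff theorem that $\SL(2,\R)\ltimes\R^n$ is not weakly amenable, the engine of the rank $\ge2$ case; and (ii) the exact values $\Lambda_{\WA}(\Sp(1,n))=2n-1$ and $\Lambda_{\WA}(\mr F_{4(-20)})=21$, whose lower bounds rest on delicate estimates for spherical functions and Kunze--Stein-type convolution inequalities rather than on any formal argument. Reproducing (ii) is what I would expect to consume most of the work.
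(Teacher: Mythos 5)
The paper does not actually prove Theorem~\ref{thm:simple}: it is recorded as a known result with references, so there is no internal proof to compare against. Your outline is a fair reconstruction of how the cited literature establishes it (rank zero via amenability, rank one via de~Canni\`ere--Haagerup and Cowling--Haagerup's sharp radial/spherical-function analysis, rank $\geq 2$ via Haagerup and Dorofaeff through subgroups of the form $\SL(2,\R)\ltimes\R^n$).

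There is, however, one genuine gap in your sketch: the theorem is stated for groups \emph{locally isomorphic} to the model groups, and weak amenability is not known to be a local-isomorphism invariant of connected Lie groups --- the paper says this explicitly, and the inequality \eqref{eq:mod-central} it leans on elsewhere is only available for discrete groups. You dispose of this with ``as one checks directly,'' but that is where real content sits. For $\SO(1,n)$ with $n\geq 3$, $\Sp(1,n)$ and $\mr F_{4(-20)}$ the fundamental group (resp.\ center) is finite, so \eqref{eq:mod-compact} settles all locally isomorphic groups; but groups locally isomorphic to $\SU(1,n)$ (including $\SO(1,2)\approx\SU(1,1)$) have universal cover with \emph{infinite} center, and the statement $\Lambda_\WA=1$ for these covers is precisely Hansen's theorem \cite{MR1079871} --- that is why it appears in the citation list, not as an input to the $\Sp(1,n)$/$\mr F_{4(-20)}$ lower bounds where you placed it. Likewise, in rank $\geq 2$ the failure of weak amenability for infinite-center covers does not follow formally from that of the linear group (there is no general ``$\Lambda_\WA(G)\geq\Lambda_\WA(G/Z)$'' for non-discrete $G$); Dorofaeff \cite{MR1418350} handles this by locating, inside an arbitrary connected simple Lie group of higher rank, closed subgroups that are covers of $\SL(2,\R)\ltimes\R^n$ and proving non-weak amenability for those directly. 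With this point repaired, your division of labour and attribution of the hard analytic inputs (the Haagerup--Dorofaeff non-weak-amenability theorem and the sharp constants $2n-1$ and $21$) match the literature the paper cites.
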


In Section~\ref{sec:semisimple}, we observe how the classification of simple Lie groups that are weakly amenable can be extended to include all semisimple Lie groups. Since it is not known in general if weak amenability of connected Lie groups is preserved under local isomorphism, it is not entirely obvious how to deduce the semisimple case from the simple case. We prove the following:

\begin{repthm}{thm:semisimple}
Let $G$ be a connected semisimple Lie group. Then $G$ is locally isomorphic to a direct product $S_1\times\cdots\times S_n$ of connected simple Lie groups $S_1,\ldots,S_n$, and $G$ is weakly amenable if and only if each $S_i$ is weakly amenable. In fact,
\[
\Lambda_\WA(G) = \prod_{i=1}^n \Lambda_\WA(S_i).
\]
\end{repthm}

Combining Theorem~\ref{thm:semisimple} and Theorem~\ref{thm:simple} one can then compute the value $\Lambda_\WA(G)$ for any connected semisimple Lie group $G$. Our proof of Theorem~\ref{thm:semisimple} relies on an inequality proved by Cowling for discrete groups (see \eqref{eq:mod-central}). In order to apply Cowling's inequality, we pass to lattices by using Haagerup's result \eqref{eq:lattice} that this does not change the Cowling-Haagerup constant. The same trick is also used in our proof of Theorem~\ref{thm:non-discrete}.

Theorem~\ref{thm:semisimple} was previously known under the additional assumption that the semisimple Lie group had finite center or finite fundamental group. Indeed, under this additional assumption Theorem~\ref{thm:semisimple} then follows from Theorem~\ref{thm:simple} and an application of the well-known permanence properties \eqref{eq:mod-compact} and \eqref{eq:product} below. The assumption of finite center or finite fundamental group can be considered as extreme cases, and Theorem~\ref{thm:semisimple} then settles the intermediate cases in which the center and the fundamental group are both infinite.

In a similar spirit, the characterization of weak amenability for connected Lie groups in general has been done in the cases where the Levi factor has finite center \cite{MR2132866} or the Lie group has trivial fundamental group \cite{MR3542782} (the case of finite fundamental group then follows from \eqref{eq:mod-compact}). Some intermediate cases remain open.

For a locally compact group $G$, we let $G_\dd$ denote the same group $G$ equipped with the discrete topology. There has previously been some interest in studying the relationship between properties of $G$ and $G_\dd$ (see e.g. \cite{MR1181157,MR1219692,cornulier-jlt,MR3345044}). For instance, it is known that if $G_\dd$ is an amenable group, then $G$ is an amenable group. The analogous question about weak amenability is open: 
\begin{que}[\cite{MR3345044}]\label{que:18}
If $G_\dd$ is weakly amenable, is $G$ weakly amenable?
\end{que}

It is a fact that a discrete group is weakly amenable if and only if all of its countable subgroups are weakly amenable (see Lemma~\ref{lem:countable}).
It thus makes no difference if one studies weak amenability of $G_\dd$ or of all countable subgroups of $G$. Note that countable subgroups of $G$ are always viewed with the discrete topology which might differ from the subspace topology coming from $G$.

Our main result is the following characterization of connected Lie groups all of whose countable subgroups are weakly amenable.

\begin{repthm}{thm:discrete}
Let $G$ be a connected Lie group, and let $G_\dd$ denote the group $G$ equipped with the discrete topology. The following are equivalent.
\begin{enumerate}
	\item[(1)] $G$ is locally isomorphic to $R\times\SO(3)^a\times\SL(2,\R)^b\times\SL(2,\C)^c$, for a solvable connected Lie group $R$ and integers $a,b,c$.
	\item[(2)] $G_\dd$ is weakly amenable with constant 1.
	\item[(3)] $G_\dd$ is weakly amenable.
	\item[(4)] Every countable subgroup of $G$ is weakly amenable with constant $1$.
	\item[(5)] Every countable subgroup of $G$ is weakly amenable.
\end{enumerate}
\end{repthm}

In \cite{MR3345044}, Theorem~\ref{thm:discrete} was proved in the special case where $G$ is a simple Lie group. In order to remove the assumption of simplicity, one needs to deal with certain semidirect products, some of which were dealt with in \cite{MR3542782}. In Section~\ref{sec:discrete} we obtain non-weak amenability results for the remaining semidirect products (see Proposition~\ref{prop:four-cases}) and thus obtain Theorem~\ref{thm:discrete}.

Our proof of Theorem~\ref{thm:discrete} relies in part on the methods of \cite{cornulier-jlt} where de~Cornulier proved that (1) in Theorem~\ref{thm:discrete} is equivalent to
\begin{enumerate}
	\item[(6)] $G_\dd$ has the Haagerup property.
\end{enumerate}


It was conjectured by Cowling (see \cite[p.~7]{MR1852148}) that a locally compact group $G$ satisfies $\Lambda_\WA(G) = 1$ if and only if $G$ has the Haagerup property. Although this is now known to be false in this generality (see \cite[Remark~2.13]{MR2680430}, \cite[Corollary~2]{MR2393636}), Theorem~\ref{thm:discrete} together with de~Cornulier's result \cite[Theorem~1.14]{cornulier-jlt} establishes Cowling's conjecture for connected Lie groups made discrete.

As another application of Theorem~\ref{thm:discrete}, we are able to settle Question~\ref{que:18} in the case of connected Lie groups. In the last section, we establish the following.

\begin{repcor}{cor:question18}
Let $G$ be a connected Lie group. If $G_\dd$ is weakly amenable, then $G$ is weakly amenable. In this case, $\Lambda_\WA(G_\dd) = \Lambda_\WA(G) = 1$.
\end{repcor}

We remark that our proof of Corollary~\ref{cor:question18} relies on the classification obtained in Theorem~\ref{thm:discrete}. It would be preferable to have a direct proof avoiding the classification.

\section{Preliminaries}\label{sec:prelim}

\subsection{Weak amenability}
Let $G$ be a locally compact group. A \emph{Herz-Schur multiplier} is a complex function $\phi$ on $G$ of the form $\phi(y^{-1}x) = \langle P(x),Q(y)\rangle$, where $P,Q\colon G\to \mathcal H$ are bounded continuous functions from $G$ to a Hilbert space $\mathcal H$ and $x,y\in G$. Note that $\phi$ is continuous and bounded by $\|P\|_\infty\|Q\|_\infty$. The Herz-Schur norm of $\phi$ is defined as
\[
\|\phi\|_{B_2} = \inf\{\|P\|_\infty\|Q\|_\infty\},
\]
where the infimum is taken over all $P,Q$ as above. With this norm and pointwise operations, the Herz-Schur multipliers form a unital Banach algebra.

The group $G$ is \emph{weakly amenable} if there is a net $(\phi_i)$ of compactly supported Herz-Schur multipliers converging to 1 uniformly on compact subsets of $G$ and satisfying $\sup_i\|\phi_i\|_{B_2} \leq C$ for some $C\geq 1$. The \emph{weak amenability constant} $\Lambda_\WA(G)$ is the infimum of those $C\geq 1$ for which such a net exists, with the understanding that $\Lambda_\WA(G) = \infty$ if $G$ is not weakly amenable. There are several equivalent definitions of weak amenability in the literature, see e.g. \cite[Proposition~1.1]{MR996553}. Weak amenability of groups should however not be confused with weak amenability of Banach algebras.

Weak amenability is preserved under several group constructions. We list here the known results needed later on and refer to \cite[Section~12.3]{MR2391387}, \cite[Section~III]{MR1120720}, \cite[Section~1]{MR996553}, \cite{MR3476201}, \cite{MR3310701} for proofs.
When $K$ is a compact normal subgrop of $G$,
\begin{linenomath*}\begin{align}\label{eq:mod-compact}
\Lambda_\WA(G) = \Lambda_\WA(G/K).
\end{align}\end{linenomath*}

If $(G_i)_{i\in I}$ is a directed collection of open subgroups in $G$ then
\begin{linenomath*}\begin{align}\label{eq:union}
\Lambda_\WA\left(\bigcup_{i\in I} G_i\right) = \sup_{i\in I} \Lambda_\WA(G_i).
\end{align}\end{linenomath*}

For two locally compact groups $G$ and $H$,
\begin{linenomath*}\begin{align}\label{eq:product}
\Lambda_\WA(G\times H)=\Lambda_\WA(G)\Lambda_\WA(H).
\end{align}\end{linenomath*}

If $G$ has a closed normal subgroup $N$ such that the quotient $G/N$ is amenable then
\begin{linenomath*}\begin{align}\label{eq:co-amenable}
\Lambda_\WA(N)= \Lambda_\WA(G).
\end{align}\end{linenomath*}
We remark that \eqref{eq:co-amenable} is stated in \cite{MR3310701} only for second countable groups, but it is not difficult to deduce the general statement from this and the Kakutani-Kodaira Theorem \cite[Theorem~8.7]{MR551496} using \eqref{eq:mod-compact} and \eqref{eq:union}.

Recall that a lattice $\Gamma$ in a locally compact group $G$ is a discrete subgroup such that the homogeneous space $G/\Gamma$ admits a $G$-invariant probability measure, where $G$ acts on $G/\Gamma$ by left translation. If $\Gamma$ is a lattice in a second countable, locally compact group $G$, then
\begin{linenomath*}\begin{align}\label{eq:lattice}
\Lambda_\WA(\Gamma)= \Lambda_\WA(G).
\end{align}\end{linenomath*}

When $Z$ is a central subgroup of a discrete group $G$ then
\begin{linenomath*}\begin{align}\label{eq:mod-central}
\Lambda_\WA(G) \leq \Lambda_\WA(G/Z).
\end{align}\end{linenomath*}

A remark on \eqref{eq:mod-central} is in order. Much work related to weak amenability for connected Lie groups would be significantly easier if \eqref{eq:mod-central} holds true for non-discrete groups $G$ as well. For instance, \cite{MR1079871} would then have been an immediate consequence of earlier work such as \cite{MR748862,MR996553}, and our Theorem~\ref{thm:semisimple} would also be an immediate consequence of earlier work. It would even be relatively easy to complete the characterization of weak amenability for connected Lie groups. Needless to say, we have not been able to generalize \eqref{eq:mod-central} to the non-discrete case so far. Sometimes, one can reduce the general case to the discrete case and then apply \eqref{eq:mod-central}. In the present paper, this is done using lattices as is most explicitly seen in the proof of Theorem~\ref{thm:semisimple}, but also in Theorem~\ref{thm:non-discrete}.

\begin{lem}\label{lem:countable}
Let $G$ be a discrete group. Then $G$ is weakly amenable if and only if every countable subgroup of $G$ is weakly amenable.
\end{lem}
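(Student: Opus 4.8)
The plan is to prove both implications directly from the definition and the permanence property \eqref{eq:union}. The nontrivial direction is that weak amenability of every countable subgroup forces weak amenability of $G$ itself; the converse is immediate because any subgroup $H \leq G$ (with the discrete topology) satisfies $\Lambda_\WA(H) \leq \Lambda_\WA(G)$, which one sees by restricting Herz-Schur multipliers: if $\phi$ is a finitely supported multiplier on $G$ with $\|\phi\|_{B_2} \leq C$, then $\phi|_H$ is a finitely supported multiplier on $H$ with $\|\phi|_H\|_{B_2} \leq C$, and pointwise convergence to $1$ is inherited. (For a discrete group, ``compactly supported'' means ``finitely supported'', and ``uniformly on compact subsets'' means ``pointwise''.)

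For the forward direction, suppose every countable subgroup of $G$ is weakly amenable. First I would reduce to a uniform bound on the weak amenability constants: let $C = \sup\{\Lambda_\WA(H) : H \leq G \text{ countable}\}$ and observe that the family of countable subgroups is directed under inclusion (the subgroup generated by two countable sets is countable). If $C < \infty$, then $G = \bigcup H$ is a directed union of the countable subgroups, each of which is open in the discrete topology, so \eqref{eq:union} gives $\Lambda_\WA(G) = \sup_H \Lambda_\WA(H) = C < \infty$, hence $G$ is weakly amenable. So the crux is showing $C < \infty$, i.e. that the constants cannot be unbounded.

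The key step, and the one I expect to be the main obstacle, is ruling out the possibility that $\Lambda_\WA(H_n) \to \infty$ along a sequence of countable subgroups $H_n \leq G$. The standard trick is to pass to the single countable subgroup $H = \langle H_n : n \in \N \rangle$ generated by all of them; this is countable, hence weakly amenable with some finite constant $\Lambda_\WA(H) = C_0 < \infty$. But each $H_n$ is a subgroup of $H$, so by the restriction argument above $\Lambda_\WA(H_n) \leq \Lambda_\WA(H) = C_0$ for all $n$, contradicting $\Lambda_\WA(H_n) \to \infty$. Therefore $C < \infty$, completing the proof. The only point requiring a little care is the monotonicity $\Lambda_\WA(H) \leq \Lambda_\WA(G)$ for discrete groups under taking arbitrary subgroups — this is where one uses that on a discrete group the approximating net may be taken with pointwise convergence and finite supports, so that restriction behaves well; this is essentially \cite[Corollary~1.12]{MR996553} or can be checked directly, and I would cite it rather than reprove it.
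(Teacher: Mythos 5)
Your proof is correct and follows essentially the same route as the paper: the easy direction is the monotonicity $\Lambda_\WA(H)\leq\Lambda_\WA(G)$ for subgroups of discrete groups, and the hard direction combines \eqref{eq:union} with the trick of absorbing a sequence of countable subgroups with unbounded constants into the single countable subgroup they generate. The paper merely phrases this contrapositively (if $G$ is not weakly amenable, produce a non-weakly amenable countable subgroup), which is the same argument.
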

\begin{proof}
Clearly, weak amenability of $G$ implies that every subgroup of $G$ is weakly amenable. Assume conversely that $G$ is not weakly amenable. We claim that $G$ contains a countable subgroup which is not weakly amenable. Since $G$ is the directed union of all its countable subgroups, it follows from \eqref{eq:union} that there is a sequence $G_1,G_2,\ldots$ of countable subgroups of $G$ such that $\Lambda_\WA(G_n) \geq n$. Let $G_\infty$ be the subgroup of $G$ generated by $G_1,G_2,\ldots$. Then $G_\infty$ is a countable subgroup of $G$ and $G_\infty$ is not weakly amenable. This completes the proof.
\end{proof}

\subsection{Structure of Lie groups}\label{sec:Levi}
Losely speaking, two Lie groups are locally isomorphic if they admit homeomorphic neighborhoods of the identity on which the group laws (here only partially defined) are identical. Equivalently, two Lie groups are locally isomorphic if and only if their Lie algebras are isomorphic (see \cite[Theorem~II.1.11]{MR514561}).

A connected Lie group $G$ has a simply connected covering $\tilde G$ which is a Lie group locally isomorphic to $G$ in such a way that the covering map is a group homomorphism. The kernel of the covering homomorphism is a discrete central subgroup of $\tilde G$. Conversely, any connected Lie group locally isomorphic to $G$ is a quotient of $\tilde G$ by a discrete central subgroup. For a discrete subgroup $N$ of the center $Z(\tilde G)$ of $\tilde G$, then the center of the quotient $\tilde G/N$ is precisely the quotient of the center $Z(\tilde G)/N$. See e.g. \cite[Chapter~II]{MR0015396} and \cite[Section~I.11]{MR1920389} for details.

Let $G$ be a connected Lie group with Lie algebra $\mf g$. Then $G$ admits a Levi decomposition $G = RS$. Here, $R$ is the solvable closed connected Lie subgroup of $G$ associated with the solvable radical of $\mf g$. The group $S$ is a semisimple connected Lie subgroup of $G$ associated with a (semisimple) Levi subalgebra $\mf s$ of $\mf g$. We refer to \cite[Section~3.18]{MR746308} and in particular \cite[Theorem~3.18.13]{MR746308} for details. The semisimple Lie algebra $\mf s$ splits as a direct sum $\mf s = \mf s_1\oplus\cdots\oplus\mf s_n$ of simple Lie algebras (for some $n\geq 0$), and if $S_i$ denotes the connected Lie subgroup of $G$ associated with the Lie subalgebra $\mf s_i$, then $S$ is locally isomorphic to the direct product $S_1\times\cdots\times S_n$.

\section{Weak amenability of semisimple Lie groups}\label{sec:semisimple}

The computation below of $\Lambda_\WA(G)$ for all semisimple Lie groups $G$ basically relies on three facts: the existence of lattices in semisimple Lie groups, the permanence results stated in Section~\ref{sec:prelim}, and most importantly that $\Lambda_\WA(G)$ is known for all simple Lie groups. 

\begin{thm}\label{thm:semisimple}
Let $G$ be a connected semisimple Lie group. Then $G$ is locally isomorphic to a direct product $S_1\times\cdots\times S_n$ of connected simple Lie groups $S_1,\ldots,S_n$, and $G$ is weakly amenable if and only if each $S_i$ is weakly amenable. In fact,
\[
\Lambda_\WA(G) = \prod_{i=1}^n \Lambda_\WA(S_i).
\]
\end{thm}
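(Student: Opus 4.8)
The plan is to reduce the statement about a general connected semisimple Lie group $G$ to the known classification of simple Lie groups (Theorem~\ref{thm:simple}) via the permanence properties from Section~\ref{sec:prelim}, together with Cowling's inequality \eqref{eq:mod-central} applied to a lattice. The first step is purely structural: by the Levi decomposition discussion in Section~\ref{sec:Levi}, since $G$ is semisimple its Lie algebra splits as $\mf s_1\oplus\cdots\oplus\mf s_n$ with each $\mf s_i$ simple, so $G$ is locally isomorphic to $S_1\times\cdots\times S_n$ where $S_i$ is the connected Lie subgroup with Lie algebra $\mf s_i$. Equivalently, the simply connected cover satisfies $\tilde G \cong \tilde S_1\times\cdots\times\tilde S_n$, and $G = \tilde G/N$ for some discrete central subgroup $N\subseteq Z(\tilde G) = Z(\tilde S_1)\times\cdots\times Z(\tilde S_n)$.

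The easy inequality is $\Lambda_\WA(G)\geq\prod_i\Lambda_\WA(S_i)$, or rather I should aim to prove the reverse of what \eqref{eq:product} gives after a reduction. The natural route for the upper bound $\Lambda_\WA(G)\leq\prod_i\Lambda_\WA(S_i)$: set $\tilde G = \tilde S_1\times\cdots\times\tilde S_n$. Then $G = \tilde G/N$ is a quotient of $\tilde G$ by a discrete central (hence closed normal) subgroup; but quotienting by a discrete central subgroup does not obviously behave well for $\Lambda_\WA$ in the non-discrete setting. Instead, the trick is to pass to a lattice. Pick a lattice $\Gamma\leq G$ (semisimple Lie groups admit lattices by Borel's theorem; one should note $G$ is second countable so \eqref{eq:lattice} applies), so $\Lambda_\WA(G) = \Lambda_\WA(\Gamma)$. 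The preimage $\tilde\Gamma$ of $\Gamma$ in $\tilde G$ is a lattice in $\tilde G$, fitting into a central extension $1\to N\to\tilde\Gamma\to\Gamma\to 1$ with $N$ central in the discrete group $\tilde\Gamma$. Now \eqref{eq:mod-central} gives $\Lambda_\WA(\tilde\Gamma)\leq\Lambda_\WA(\tilde\Gamma/N) = \Lambda_\WA(\Gamma) = \Lambda_\WA(G)$, while \eqref{eq:lattice} applied in $\tilde G$ gives $\Lambda_\WA(\tilde\Gamma) = \Lambda_\WA(\tilde G) = \prod_i\Lambda_\WA(\tilde S_i)$ by \eqref{eq:product}. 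For the simply connected simple factors, $\Lambda_\WA(\tilde S_i)$ equals $\Lambda_\WA(S_i)$ since both are locally isomorphic to the same simple Lie group and Theorem~\ref{thm:simple} assigns a value depending only on the local isomorphism class. So $\prod_i\Lambda_\WA(S_i) = \Lambda_\WA(\tilde G) = \Lambda_\WA(\tilde\Gamma)\leq\Lambda_\WA(G)$, which is one of the two inequalities.

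For the reverse inequality $\Lambda_\WA(G)\leq\prod_i\Lambda_\WA(S_i)$: here I want to realize $G$ as a quotient that can only decrease the constant, or equivalently exhibit a suitable normal subgroup with amenable quotient and apply \eqref{eq:co-amenable}, or build the approximating multipliers on $G$ directly by pulling back from the simple factors. The cleanest approach: the $S_i$ sit inside $G$ as closed connected normal subgroups (the images of the $\tilde S_i$), and $G$ is locally isomorphic to $S_1\times\cdots\times S_n$, but the product map $S_1\times\cdots\times S_n\to G$ is a covering with discrete central kernel contained in a compact-by-nothing group — actually the kernel need not be compact. The resolution is again via lattices combined with \eqref{eq:mod-central} run in the other direction: take a lattice $\Gamma\leq G$; there is a finite-index-type control, but more directly, choose the lattice $\Gamma$ so that it decomposes compatibly — e.g. take $\Gamma_i$ a lattice in $S_i$ and let $\Gamma = \Gamma_1\times\cdots\times\Gamma_n$ viewed inside $G$ via the covering (one must check this lands as a lattice, using that the covering $S_1\times\cdots\times S_n\to G$ has discrete kernel so images of lattices are lattices up to finite index). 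Then $\Lambda_\WA(G) = \Lambda_\WA(\Gamma) \leq \Lambda_\WA(\Gamma_1\times\cdots\times\Gamma_n) = \prod_i\Lambda_\WA(\Gamma_i) = \prod_i\Lambda_\WA(S_i)$, where the middle inequality uses that $\Gamma$ is a quotient of $\Gamma_1\times\cdots\times\Gamma_n$ by a central subgroup together with \eqref{eq:mod-central}, and \eqref{eq:lattice}, \eqref{eq:product} supply the rest. The main obstacle, and the point requiring the most care, is precisely this lattice bookkeeping: verifying that the relevant images and preimages of lattices under the covering maps are again lattices (this is standard but needs the discreteness of the kernels and second countability), and making sure every invocation of \eqref{eq:mod-central} is on an honestly discrete group with the subgroup honestly central. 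Once the lattice reductions are set up correctly, the two inequalities combine to give $\Lambda_\WA(G) = \prod_{i=1}^n\Lambda_\WA(S_i)$, and the weak amenability equivalence is the special case distinguishing finite from infinite.
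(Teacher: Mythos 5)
Your first inequality is fine and is exactly half of the paper's argument: a lattice $\Gamma\leq G$ has preimage $\tilde\Gamma\leq\tilde G$ which is again a lattice and a central extension of $\Gamma$, so \eqref{eq:mod-central}, \eqref{eq:lattice} and \eqref{eq:product} give $\prod_i\Lambda_\WA(\tilde S_i)=\Lambda_\WA(\tilde G)=\Lambda_\WA(\tilde\Gamma)\leq\Lambda_\WA(\Gamma)=\Lambda_\WA(G)$, and the local-isomorphism invariance of the values in Theorem~\ref{thm:simple} identifies $\Lambda_\WA(\tilde S_i)$ with $\Lambda_\WA(S_i)$. The genuine gap is in your upper bound $\Lambda_\WA(G)\leq\prod_i\Lambda_\WA(S_i)$. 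There you realize the image $\Gamma$ of $\Gamma_1\times\cdots\times\Gamma_n$ under the covering $S_1\times\cdots\times S_n\to G$ as a quotient of $\Gamma_1\times\cdots\times\Gamma_n$ by a central subgroup and invoke \eqref{eq:mod-central} to conclude $\Lambda_\WA(\Gamma)\leq\Lambda_\WA(\Gamma_1\times\cdots\times\Gamma_n)$. But \eqref{eq:mod-central} goes the other way: for a discrete group $H$ with central subgroup $Z$ it states $\Lambda_\WA(H)\leq\Lambda_\WA(H/Z)$, so your setup only yields $\Lambda_\WA(\Gamma_1\times\cdots\times\Gamma_n)\leq\Lambda_\WA(\Gamma)$, i.e., the lower bound again. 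There is no permanence result allowing one to pass from a discrete group to a central quotient without possibly increasing the constant; that is precisely the difficulty the whole proof must circumvent. (Secondary issues in the same step: the kernel of $S_1\times\cdots\times S_n\to G$ is a discrete central subgroup that can be \emph{infinite}, so ``up to finite index'' is not the right hedge, and the image of $\Gamma_1\times\cdots\times\Gamma_n$ need not even be discrete without further argument.)

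The correct way to get the upper bound is to place $G$ in the middle of a chain rather than at the bottom: let $Z=Z(G)$ (discrete by semisimplicity) and $\bar G=G/Z$, which is an honest direct product $\bar S_1\times\cdots\times\bar S_n$ of centerless simple groups. Take a lattice $\bar\Gamma\leq\bar G$ (Borel's theorem), and let $\Gamma\leq G$ be its preimage; then $\Gamma$ is a lattice in $G$ and $\bar\Gamma=\Gamma/Z$ is a central quotient of the \emph{discrete} group $\Gamma$, so \eqref{eq:mod-central} now points the right way:
$\Lambda_\WA(G)=\Lambda_\WA(\Gamma)\leq\Lambda_\WA(\bar\Gamma)=\Lambda_\WA(\bar G)=\prod_i\Lambda_\WA(\bar S_i)=\prod_i\Lambda_\WA(S_i)$,
using \eqref{eq:lattice}, \eqref{eq:product} and again Theorem~\ref{thm:simple} for the last identification. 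This is the paper's proof; combined with your (correct) lower-bound half, applied to this same $\Gamma$ and its preimage in $\tilde G$, it gives the stated equality.
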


\begin{proof}
Let $Z$ denote the center of $G$, $\tilde G$ the universal cover of $G$, and $\bar G = G/Z$. By semisimplicity, $Z$ is discrete. The Lie algebra $\mf g$ of $G$ is a direct sum $\mf g = \mf s_1\oplus\cdots\oplus\mf s_n$ of simple Lie algebras. Let $\tilde S_i$ and $\bar S_i$ denote the analytic subgroups of $\tilde G$ and $\bar G$ corresponding to $\mf s_i$, respectively. Then we have the following direct product decompositions
\[
\tilde G = \prod_{i=1}^n \tilde S_i \qquad\text{and}\qquad \bar G = \prod_{i=1}^n \bar S_i.
\]
Let $\bar\Gamma$ be a lattice in $\bar G$ (a lattice exists by \cite[Theorem~14.1]{MR0507234}).
Consider the covering homomorphisms
\[
\tilde G\to G \qquad\text{and}\qquad G\to\bar G,
\]
and let $\Gamma$ be the lift of $\bar\Gamma$ to $G$, and let $\tilde\Gamma$  be the lift of $\Gamma$ to $\tilde G$. Then $\Gamma\leq G$ is a lattice, and $\tilde\Gamma\leq\tilde G$ is a lattice. Using \eqref{eq:lattice}, \eqref{eq:product}, and \eqref{eq:mod-central} we obtain
\begin{linenomath*}\begin{align*}
\Lambda_\WA(G) &= \Lambda_\WA(\Gamma) \leq \Lambda_\WA(\bar\Gamma) = \Lambda_\WA(\bar G) = \prod_{i=1}^n \Lambda_\WA(\bar S_i), \\
 \Lambda_\WA(G) & = \Lambda_\WA(\Gamma) \geq \Lambda_\WA(\tilde\Gamma) = \Lambda_\WA(\tilde G) =\prod_{i=1}^n \Lambda_\WA(\tilde S_i).
\end{align*}\end{linenomath*}

By Theorem~\ref{thm:simple}, we have $\Lambda_\WA(\bar S_i) = \Lambda_\WA(\tilde S_i)$ for every $i$, and this concludes the proof.
\end{proof}

\section{Weak amenability of Lie groups made discrete}\label{sec:discrete}

When $G$ is a Lie group we denote by $G_\dd$ the group $G$ equipped with the discrete topology. We recall \cite[Theorem~1.10]{MR3345044} which will be used in the proof of Theorem~\ref{thm:discrete}.
\begin{thm}[\cite{MR3345044}]\label{thm:simple-discrete}
For a connected simple Lie group $S$, the following are equivalent.
\begin{itemize}
	\item $S$ is locally isomorphic to $\SO(3)$, $\SL(2,\R)$, or $\SL(2,\C)$.
	\item $S_\dd$ is weakly amenable.
	\item $S_\dd$ is weakly amenable with constant 1.
\end{itemize}
\end{thm}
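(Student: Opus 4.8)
The implication (c) $\Rightarrow$ (b) is trivial, so the plan is to prove (a) $\Rightarrow$ (c) together with the contrapositive of (b) $\Rightarrow$ (a); the cycle (b) $\Rightarrow$ (a) $\Rightarrow$ (c) $\Rightarrow$ (b) then yields all the equivalences.

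For (a) $\Rightarrow$ (c) the first move is to reduce everything to a single group. If $S$ is locally isomorphic to $\SO(3)$, $\SL(2,\R)$ or $\SL(2,\C)$, write $S=\tilde S/N$ with $N$ a discrete central subgroup of the universal cover $\tilde S$. Passing to the adjoint group and combining \eqref{eq:mod-central} and \eqref{eq:mod-compact} with the monotonicity of $\Lambda_\WA$ under subgroups of a discrete group, and using that $\SO(3)$ and $\PSL(2,\R)$ embed as subgroups of $\PSL(2,\C)$, one gets
\[
\Lambda_\WA(S_\dd)\ \leq\ \Lambda_\WA(\PSL(2,\C)_\dd)\ =\ \Lambda_\WA(\SL(2,\C)_\dd),
\]
so it suffices to prove $\Lambda_\WA(\SL(2,\C)_\dd)=1$. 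By Lemma~\ref{lem:countable} and \eqref{eq:union} this reduces to showing $\Lambda_\WA(\Gamma)=1$ for every finitely generated $\Gamma\leq\SL(2,\C)$. Here the Tits alternative for $\SL_2$ gives a dichotomy: either $\Gamma$ is virtually solvable, hence amenable, hence weakly amenable with constant $1$; or, after passing to a finite-index subgroup, $\Gamma$ is Zariski dense in $\SL_2$, so $\Gamma\leq\SL(2,A)$ for a finitely generated integral domain $A$ of characteristic $0$ whose fraction field $K$ is finitely generated over $\Q$. If $K$ is a number field, then $\Gamma$ lies in an $S$-arithmetic group $\SL(2,\mathcal O_{K,S})$, which by Borel--Harish-Chandra is a lattice in the second countable group $\prod_{v\in S}\SL(2,K_v)$; each factor is a rank-one simple group over a local field and hence weakly amenable with constant $1$ (Theorem~\ref{thm:simple} at the archimedean places, the proper action on the locally finite Bruhat--Tits tree at the others), so \eqref{eq:product} and \eqref{eq:lattice} give $\Lambda_\WA(\Gamma)=1$. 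When $K$ has positive transcendence degree I would reduce to the number-field case by a reduction-of-transcendence-degree argument (a specialization of $K$ that remains faithful on $\Gamma$, or an action of $\SL(2,K)$ on a Bruhat--Tits tree of a discrete valuation of $K$); alternatively one can start from de~Cornulier's result that $\SL(2,\C)_\dd$ has the Haagerup property and try to promote its proof to constant $1$. I expect \emph{this last point---the Zariski-dense case in positive transcendence degree---to be the main obstacle}, since it is exactly where the argument leaves the world of arithmetic lattices.

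For (b) $\Rightarrow$ (a) I would argue by contraposition. Suppose the Lie algebra $\mf s$ of $S$ is simple but not isomorphic to $\mf{su}(2)$, $\mf{sl}(2,\R)$ or $\mf{sl}(2,\C)$; I must produce a countable subgroup of $S$ that is not weakly amenable. As $\mf s$ is none of the three, its complexification has rank at least $2$. Using the classification of forms of semisimple groups over number fields---quadratic and Hermitian forms over suitable quadratic extensions for the classical types, the Hasse principle for $H^1$ of simply connected groups (Kneser--Harder--Chernousov) for the exceptional ones---one can build an absolutely almost simple, simply connected algebraic group $\mathbf G$ over a number field $k$, an archimedean place $v_0$, and a non-archimedean place $v_1$, such that $\mathbf G(k_{v_0})$ is locally isomorphic to $S$ while $\mathbf G$ is split at $v_1$, hence of $k_{v_1}$-rank at least $2$. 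Then $\Gamma=\mathbf G(\mathcal O_{k,S})$ with $S=\{v_0,v_1,\ldots\}$ is, again by Borel--Harish-Chandra, a lattice in the second countable group $\prod_{v\in S}\mathbf G(k_v)$, one of whose factors, $\mathbf G(k_{v_1})$, is a higher-rank simple group over a local field and therefore not weakly amenable. Hence \eqref{eq:product} and \eqref{eq:lattice} give $\Lambda_\WA(\Gamma)=\infty$. Finally $\Gamma$ embeds into $\mathbf G(k_{v_0})$ via $k\hookrightarrow k_{v_0}$; a finite-index normal subgroup $\Gamma_1\leq\Gamma$ lies in the identity component $\mathbf G(k_{v_0})^\circ$, a connected Lie group with Lie algebra $\mf s$, so of the form $\tilde S/N$; by \eqref{eq:co-amenable}, $\Lambda_\WA(\Gamma_1)=\infty$, whence $(\tilde S/N)_\dd$ is not weakly amenable, and two applications of \eqref{eq:mod-central} (to the central quotients $\tilde S\to\tilde S/N$ and $\tilde S\to S$) show that $S_\dd$ is not weakly amenable. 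In this direction \emph{the main difficulty is the number-theoretic construction}---producing a global form that realizes the prescribed real form of $\mf s$ at $v_0$ while having higher rank at a non-archimedean place---together with the input, known for all local fields, that higher-rank simple groups fail weak amenability.
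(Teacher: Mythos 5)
First, note that the paper does not prove this statement at all: it is quoted with attribution from \cite{MR3345044} (Theorem~1.10 there) and used as an input to Theorem~\ref{thm:discrete}. So what you are really attempting is a reproof of Knudby--Li's theorem, and your sketch has to stand on its own; judged that way, it has two genuine gaps, one in each direction.

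In the positive direction, your reduction to $\Lambda_\WA(\SL(2,\C)_\dd)=1$, to finitely generated subgroups, and the $S$-arithmetic argument over number fields are fine, but the case you flag -- finitely generated $\Gamma\leq\SL(2,\C)$ whose entries generate a field of positive transcendence degree -- is not a loose end, it is the core of the theorem, and neither fallback you propose closes it. Embedding the finitely generated field into some $\Q_p$ (Cassels) realizes $\Gamma$ only as a \emph{non-discrete} subgroup of $\SL(2,\Q_p)$; since weak amenability is not known to pass to non-closed subgroups, this is exactly the ``made discrete'' problem you started with, not a solution. A specialization to a number field that is injective on all of the infinite group $\Gamma$ is likewise not available in general: specializations preserve finitely many non-identities, not faithfulness. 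What the cited source actually does at this point is Guentner--Higson--Weinberger-type work: it produces a metrically proper action of $\Gamma$ on a finite product of trees (coming from discrete valuations, in general not locally finite) and real/complex hyperbolic spaces, and then builds finitely supported Herz--Schur multipliers factor by factor with norms tending to $1$. That construction is the substantial missing content.

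In the negative direction the final transfer is backwards. Inequality \eqref{eq:mod-central} reads $\Lambda_\WA(G)\leq\Lambda_\WA(G/Z)$ for discrete $G$, so non-weak amenability descends from a group to its central quotients; it does not lift from your linear form $\tilde S/N$ (which has finite center) to $\tilde S$, nor does it reach an arbitrary $S=\tilde S/M$ unless $M\supseteq N$. As written your argument says nothing precisely in the delicate case where $S$ has infinite center, e.g.\ the universal cover of a higher-rank Hermitian group. The repair is the paper's own lattice-lifting trick (cf.\ the proofs of Theorems~\ref{thm:semisimple} and~\ref{thm:non-discrete}): lift your $S$-arithmetic lattice along $\tilde S\times\prod_{v\neq v_0}\mathbf G(k_v)\to\prod_{v\in S}\mathbf G(k_v)$, note the preimage is again a lattice and projects injectively into $\tilde S$, and apply \eqref{eq:lattice} and \eqref{eq:product} upstairs to get a non-weakly amenable countable subgroup of $\tilde S$ itself; then \eqref{eq:mod-central}, used in its correct (downward) direction, handles every $S$ locally isomorphic to $\tilde S$. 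Beyond this, two inputs you invoke are heavy and must be justified or cited: the existence of a global form with the prescribed real form at $v_0$ and rank at least two at some finite place (and you need this also for the compact forms, $\Sp(1,n)$ and $\mr F_{4(-20)}$, which are excluded groups of real rank $\leq 1$), and the failure of weak amenability for higher-rank simple groups over non-archimedean local fields, which rests on the failure of the approximation property (Lafforgue--de la Salle, Haagerup--de Laat) and is well outside the toolkit of the present paper.
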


In order to generalize Theorem~\ref{thm:simple-discrete} to non-simple Lie groups we need to consider certain semidirect products which we now describe. A main ingredient to prove non-weak amenability of these semidirect products is \cite[Theorem~5]{MR3542782}, which we recall here for the convenience of the reader.
\begin{thm}[{\cite[Theorem~5]{MR3542782}}]\label{thm:ozawa}
Let $H\curvearrowright N$ be an action by automorphisms of a discrete group $H$ on a discrete group $N$, and let $G = N\rtimes H$ be the corresponding semidirect product group. Let $N_0$ be a proper subgroup of $N$. Suppose
\begin{enumerate}
	\item $H$ is not amenable;
	\item $N$ is amenable;
	\item $N_0$ is $H$-invariant;
	\item For every $x\in N\setminus N_0$, the stabilizer of $x$ in $H$ is amemable.
\end{enumerate}
Then $G$ is not weakly amenable.
\end{thm}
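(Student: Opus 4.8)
The plan is to argue by contradiction. Assume $G=N\rtimes H$ is weakly amenable, put $\Lambda=\Lambda_\WA(G)<\infty$, and fix a net $(\phi_i)$ of finitely supported Herz--Schur multipliers on $G$ with $\phi_i\to1$ pointwise and $\|\phi_i\|_{B_2}\le\Lambda$; one could equivalently work with a completely bounded approximate identity of the reduced crossed product $C^*_r(G)=C^*_r(N)\rtimes_r H$. The aim is to distil from this data an $H$-invariant mean on the $H$-set $N\setminus N_0$, and then contradict (1) by means of (4).

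First I would localize the multipliers along $N$. For $h\in H$, restricting $\phi_i$ to the coset $N\cdot(e,h)$ and pulling back along $n\mapsto(n,e)(e,h)$ gives functions $\phi_i^{\,h}$ on $N$ with $\|\phi_i^{\,h}\|_{B_2}\le\Lambda$, finitely supported, $\phi_i^{\,h}\to1$ pointwise. Writing $\phi_i$ as in the definition of $\|\cdot\|_{B_2}$, say $\phi_i\big((m,g)^{-1}(n,h)\big)=\langle P_i(n,h),Q_i(m,g)\rangle$, and using $(m,g)^{-1}(n,h)=\big(g^{-1}\!\cdot(m^{-1}n),\,g^{-1}h\big)$, one sees that the fibrewise data for different $h$ are linked exactly by the $H$-action on $N$: taking $g=h$ exhibits $(n,m)\mapsto\phi_i\big(h^{-1}\!\cdot(m^{-1}n),e\big)$ as a Schur multiplier realized by $P_i(\cdot,h),Q_i(\cdot,h)$, while taking $m=n$ shows $\langle P_i(n,h),Q_i(n,g)\rangle=\phi_i(e,g^{-1}h)$ is independent of $n$. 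Hypothesis (2) enters here: since $N$ is amenable one has $M_0A(N)=B(N)$ isometrically, so each $\phi_i^{\,h}$ is a genuine matrix coefficient of a unitary representation of $N$ with uniformly bounded vectors. This is what turns the merely bounded Herz--Schur data along the $N$-fibres into positive-type data amenable to averaging.

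Next I would average out $N$. Using a F{\o}lner net for $N$, average over $n$ the rank-one kernels $n\mapsto P_i(n,h)\otimes\overline{Q_i(n,h)}$ (equivalently, the positive functionals attached to the $\phi_i^{\,h}$); amenability of $N$ forces the averages to become $N$-almost-invariant, the constant $\Lambda$ keeps everything uniformly bounded, and $\phi_i\to1$ pins the limiting functionals down to the trivial representation of $N$. Taking a further limit in $i$ should produce a finitely additive $H$-invariant probability measure on $N$. Because $N_0$ is $H$-invariant (hypothesis (3)) and because the normalization just described concentrates the mass on elements genuinely moved by $H$, this mean can be arranged to live on the $H$-invariant set $N\setminus N_0$, which is non-empty as $N_0\subsetneq N$, and (as part of the same bookkeeping) to be carried by a single orbit $H\cdot x\cong H/\mathrm{Stab}_H(x)$ with $x\in N\setminus N_0$. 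By (4), $\mathrm{Stab}_H(x)$ is amenable, hence co-amenable and amenable in $H$, so $H$ is amenable, contradicting (1).

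I expect the main obstacle to be exactly the construction of the $H$-invariant mean from the weak-amenability net: a Herz--Schur multiplier on $G$ is bounded but not positive, so one really has to go through $M_0A(N)=B(N)$ (hence use amenability of $N$) to obtain positive functionals before averaging, and one must control the constants uniformly through the iterated limits while ensuring the mean does not escape into $N_0$ --- which is precisely the point of the hypotheses on $N_0$. This averaging is the technical heart of the argument. Once it is in place, the final step, that an $H$-invariant mean on a homogeneous space with amenable stabilizers forces $H$ to be amenable, is routine.
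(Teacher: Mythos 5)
The paper does not actually prove this statement---it is quoted verbatim from \cite{MR3542782} for the reader's convenience---so your proposal has to be judged against the argument in that reference (which adapts Ozawa's proof that wreath products over non-amenable groups are not weakly amenable). Your global scheme (assume weak amenability, extract an invariant object from the multiplier net using amenability of $N$, then contradict non-amenability of $H$ via the stabilizer hypothesis) is the right one in spirit, and the opening reductions (fiberwise restriction of the $\phi_i$, the isometric identity $M_0A(N)=B(N)$ for amenable $N$) are sound. The genuine gap is the central step: the claim that the F{\o}lner averaging produces an $H$-invariant mean that ``can be arranged to live on $N\setminus N_0$'' and even be carried by a single orbit. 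An $H$-invariant mean on $N$ always exists, namely the point mass $\delta_e$, and it is concentrated inside $N_0$; nothing in the averaging procedure you describe uses hypothesis (3) or gives any mechanism forcing mass off $N_0$, so ``the normalization concentrates the mass on elements genuinely moved by $H$'' is an assertion, not an argument. Note that under hypotheses (1), (3), (4) an $H$-invariant mean on $N\setminus N_0$ cannot exist at all, so deriving one from weak amenability is precisely the entire content of the theorem---and that is the step left unproved. (The single-orbit concentration is also not available for finitely additive invariant measures, but it is unnecessary: a countable $H$-set all of whose stabilizers are amenable is an amenable $H$-space, so any invariant mean on it already forces $H$ to be amenable. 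A smaller issue: elements of $B(N)$ are not positive definite, so some absolute-value or $2\times 2$-matrix trick is needed before any averaging.)

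For comparison, the actual mechanism in the cited proof does not output a mean on $N$ by averaging coefficients. From the multiplier net one extracts a net of vectors in $\ell^2(N\times N)$ (equivalently, a state on $B(\ell^2(N))$) that is approximately invariant both under the $H$-action and under diagonal left translation by $N$, and which in addition restricts to the canonical trace on the group von Neumann algebra of $N$---this is the ``weakly compact action'' technology of Ozawa--Popa \cite{MR2680430}. It is exactly this extra compatibility with the left regular representation of $N$, which a naive F{\o}lner average of matrix coefficients does not provide, that rules out degenerate limits such as $\delta_e$ and makes the dichotomy between $N_0$ and $N\setminus N_0$ (hypotheses (3) and (4)) effective. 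To complete your argument you would need to build this trace-compatible invariant state from the fiberwise data and then run the $N_0$ versus $N\setminus N_0$ case analysis with it; as written, the proposal restates the key intermediate object without constructing it.
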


The semidirect products of interest also appear in \cite{cornulier-jlt} to which we refer the reader for further details.


The irreducible real representations of $\SL(2,\R)$ and $\SU(2)$ are well-known. We describe them below.

For each natural number $n\geq 1$, the group $\SL(2,\R)$ has a unique irreducible real representation $V_n$ of dimension $n$. (see \cite[p.~107]{MR0430163}).
It may be realized as the natural action of $\SL(2,\R)$ on the homogeneous polynomials in two real variables of degree $n-1$.

Similarly, the group $\SU(2)$ acts on the homogeneous polynomials in two complex variables of degree $n-1$. When $n = 2m$ is even, this representation is still irreducible as a real representation $V_{4m}$ of dimension $4m$. When $n = 2m+1$ is odd, the representation is the complexification of an irreducible real representation $V_{2m+1}$ of dimension $2m+1$. The representations $V_{2m+1}$ and $V_{4m}$ make up all the irreducible real representations of $\SU(2)$. We refer to \cite{MR781344,MR1090745} for details.

Let $S$ be $\SL(2,\R)$ or $\SU(2)$, and let $\mf s$ be the Lie algebra of $S$. If $V$ is a real irreducible representation of $S$, then $V$ also carries the derived representation of $\mf s$. Let $\Alt_{\mf s}(V)$ denote the real vector space of alternating bilinear forms $\phi$ on $V$ that are $\mf s$-invariant, that is, bilinear forms $\phi\colon V\times V\to\R$ satisfying
\[
\phi(x,x) = 0
\qquad\text{and}\qquad
\phi(s.x,y) + \phi(x,s.y) = 0 \qquad\text{for all } s\in\mf s,\ x,y\in V.
\]
The Lie group $H(V)$ is defined as $V\times\Alt_{\mf s}(V)^*$ with group multiplication given by
\[
(x,z)(x',z') = (x+x', z+z' + e_{x,x'}), \qquad x,x'\in V,\ z,z'\in\Alt_{\mf s}(V)^*,
\]
where $e_{x,x'}\in\Alt_{\mf s}(V)^*$ is the evaluation functional defined by $e_{x,x'}(\phi) = \phi(x,x')$. The group $S$ acts on $H(V)$ by $s.(x,z) = (s.x,z)$. When $Z\subseteq\Alt_{\mf s}(V)^*$ is a subspace, we obtain a quotient group $H(V)/Z$, and the action of $S$ on $H(V)$ descends to an action on $H(V)/Z$. In this way we obtain the semidirect product
\[
H(V)/Z\rtimes S.
\]
\begin{lem}\label{lem:proper}
If $G$ is a proper, real algebraic subgroup of $\SL(2,\R)$ or $\SU(2)$, then $G_\dd$ is amenable.
\end{lem}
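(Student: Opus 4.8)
The plan is to use the structure theory of algebraic subgroups of $\SL(2)$ together with the fact that amenability is inherited by closed subgroups and stable under extensions, combined with the observation that the only obstruction to amenability of a discrete group coming from a linear algebraic group is the presence of a nonabelian free subgroup, which in turn forces the group to be (a cover of) $\SL(2)$ or $\mathrm{PSL}(2)$ itself. Concretely, let $G$ be a proper real algebraic subgroup of $\SL(2,\R)$ or $\SU(2)$. In the compact case $\SU(2)$ there is nothing to do, since every subgroup of a compact group, viewed as a discrete group, is amenable (finitely generated subgroups of $\SU(2)$ need not be amenable, so one must be slightly careful here\,---\,but a proper \emph{algebraic} subgroup of $\SU(2)$ has dimension at most $1$, hence its identity component is a torus and $G$ itself is virtually abelian, so $G_\dd$ is amenable). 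For $\SL(2,\R)$, the classification of connected subgroups of $\mathrm{PSL}(2,\R)$ shows that a proper algebraic subgroup $G$ has identity component $G^\circ$ conjugate to a subgroup of the upper triangular (Borel) subgroup, hence $G^\circ$ is solvable (conjugate into $AN$, $A$, $N$, or $\{e\}$, or $SO(2)$).

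First I would reduce to the identity component: since $G$ is algebraic, $G/G^\circ$ is finite, so $G_\dd$ is amenable if and only if $(G^\circ)_\dd$ is amenable, by the extension property of amenability (a finite extension of an amenable discrete group is amenable). Next I would invoke the classification of proper connected algebraic (equivalently, Zariski-closed connected) subgroups of $\SL(2,\R)$: up to conjugacy these are the trivial group, the diagonal torus $A\cong\R^\times$ (or its identity component $\cong\R_{>0}$), the unipotent subgroup $N\cong\R$, the Borel $AN$, the rotation group $SO(2)$, and $\pm 1$ times these. Each of these connected groups is solvable (indeed metabelian). A solvable group is amenable as a discrete group, and this property passes to all subgroups; hence $(G^\circ)_\dd$ is amenable, and therefore $G_\dd$ is amenable.

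The main point to get right\,---\,and the only place where ``algebraic'' is genuinely used rather than just ``closed''\,---\,is that \emph{properness} of an algebraic subgroup of $\SL(2)$ forces solvability of the identity component; this fails for general closed subgroups only in that an abstract (non-algebraic) subgroup could be dense. I expect the cleanest way to present this is: a proper algebraic subgroup has dimension $0$, $1$, $2$ (not $3$), the adjoint action on $\mathfrak{sl}(2,\R)\cong\R^3$ then shows $G^\circ$ preserves a line or is $SO(2)$, and in the first case $G^\circ$ lies in the stabilizer of a point or a pair of points on the boundary circle, i.e. in a conjugate of $AN$ or $A$; in all cases $G^\circ$ is solvable. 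Then amenability follows from the standard permanence properties. The step I expect to be the mild obstacle is simply citing the classification of algebraic subgroups of $\SL(2)$ in the correct form and over $\R$ rather than over $\C$; everything after that is a one-line appeal to closure of amenable discrete groups under subgroups and finite extensions.
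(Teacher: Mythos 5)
Your proof is correct and follows essentially the same route as the paper: finiteness of the component group of a real algebraic group (Whitney's theorem) reduces everything to the identity component, which is then solvable, hence amenable as a discrete group. The paper obtains solvability without any classification --- the Lie algebra of a proper connected closed subgroup of $\SL(2,\R)$ or $\SU(2)$ is a proper subalgebra of a three-dimensional Lie algebra, hence has dimension at most two and is therefore automatically solvable --- so, contrary to your closing remark, algebraicity is genuinely needed only for the finiteness of the component group, while properness plus closedness already forces solvability of the identity component.
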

\begin{proof}
Let $S$ be $\SL(2,\R)$ or $\SU(2)$, and let $\mf s$ be the Lie algebra of $S$. The group $G$ has only finitely many components (in the usual Hausdorff topology) (see \cite[Theorem~3]{MR0095844} or \cite[Theorem~3.6]{MR1278263}). It is therefore enough to show that the identity component $G_0$ of $G$ is amenable as a discrete group.

Since $G_0$ is a connected, proper, closed subgroup of $S$, its Lie algebra $\mf g$ is a proper Lie subalgebra of $\mf s$. The dimension of $\mf g$ is therefore at most two, and $\mf g$ must be a solvable Lie algebra. So $G_0$ is a solvable group. In particular, $G_0$ is amenable in the discrete topology.
\end{proof}

In what follows below, we have to exclude the trivial irreducible representation of $S$. We thus assume from now on that $\dim V \geq 2$.

\begin{lem}\label{lem:stabilizer}
If $\dim V\geq 2$ and if $(x,z)\in H(V)/Z$ and $x\neq 0$, then the stabilizer of $(x,z)$ in $S$ is amenable in the discrete topology.
\end{lem}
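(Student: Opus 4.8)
The plan is to reduce the statement to a question about stabilizers of points in the representation $V$ under the action of $S$, and then apply Lemma~\ref{lem:proper}. First I would observe that the action of $S$ on $H(V)/Z$ is given by $s.(x,z) = (s.x, z)$, so an element $s\in S$ stabilizes $(x,z)$ if and only if $s.x = x$ in $V$; the second coordinate plays no role. Hence the stabilizer of $(x,z)$ in $S$ equals the stabilizer $S_x = \{s\in S : s.x = x\}$ of the vector $x\in V$, and it suffices to show that $S_x$ is amenable as a discrete group whenever $x\neq 0$.

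Next I would argue that $S_x$ is a proper algebraic subgroup of $S$. Since the representation $V$ is a morphism of real algebraic groups (the irreducible representations $V_n$ of $\SL(2,\R)$ and the $V_{2m+1}$, $V_{4m}$ of $\SU(2)$ are all algebraic), the stabilizer $S_x$ is a real algebraic subgroup of $S$ defined by the polynomial (in fact linear) equations $s.x = x$. It is proper: if $S_x = S$ then $x$ would be a nonzero vector fixed by all of $S$, i.e. $V$ would contain the trivial subrepresentation; but $V$ is irreducible of dimension $\geq 2$, so $V$ is not the trivial representation and has no nonzero fixed vector. Therefore $S_x$ is a proper real algebraic subgroup of $S = \SL(2,\R)$ or $\SU(2)$.

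Finally I would invoke Lemma~\ref{lem:proper}: every proper real algebraic subgroup of $\SL(2,\R)$ or $\SU(2)$ is amenable as a discrete group. Applying this to $G = S_x$ gives that $S_x = (S_x)_\dd$ is amenable, and by the first step this is exactly the stabilizer of $(x,z)$ in $S$, completing the proof.

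I do not expect a serious obstacle here; the only point requiring a little care is the claim that $S_x$ is \emph{algebraic} (as opposed to merely closed), since Lemma~\ref{lem:proper} is phrased for algebraic subgroups. This is handled by noting that $\SU(2)$ and $\SL(2,\R)$ are real algebraic groups and the relevant representations are algebraic, so $S_x$ is cut out by polynomial equations. Alternatively, one could bypass the algebraicity issue entirely by repeating the argument in the proof of Lemma~\ref{lem:proper}: $S_x$ is a closed proper subgroup of $S$, hence its identity component has Lie algebra of dimension at most two and is therefore solvable, so $(S_x)_0$ is amenable as a discrete group; and $S_x$ has finitely many components since it is algebraic (or, for $\SU(2)$, since $\SU(2)$ is compact and $S_x$ is a closed subgroup whose identity component has finite index by dimension considerations together with compactness). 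Either route gives amenability of $S_x$ in the discrete topology.
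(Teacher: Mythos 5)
Your proposal is correct and follows essentially the same route as the paper: reduce to the stabilizer of $x$ in $V$ (the second coordinate being irrelevant), note that it is a proper real algebraic subgroup of $S$ by irreducibility and algebraicity of the action on homogeneous polynomials, and conclude via Lemma~\ref{lem:proper}. The extra remark on bypassing algebraicity is a harmless variant of the same argument.
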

\begin{proof}
The stabilizer of $(x,z)$ in $S$ coincides with the stabilizer of $x$ in $S$. Since $x\neq 0$, and $S$ acts irreducibly on $V$, the stabilizer of $x$ in $S$ is a proper subgroup. It follows from the explicit description of the action of $S$ on $V$ as the action on the homogeneous polynomials in two variables that the stabilizer is moreover a real algebraic subgroup. Hence, Lemma~\ref{lem:proper} shows that the stabilizer is amenable in the discrete topology.
\end{proof}

\begin{prop}\label{prop:minimal-groups}
If $\dim V\geq 2$, the group $H(V)/Z\rtimes S$ is not weakly amenable in the discrete topology.
\end{prop}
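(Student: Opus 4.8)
The plan is to apply Ozawa's criterion (Theorem~\ref{thm:ozawa}) to the semidirect product $G = (H(V)/Z)\rtimes S$, with $N = H(V)/Z$, $H = S$, and the distinguished proper $H$-invariant subgroup $N_0 = Z(N) = \Alt_{\mf s}(V)^*/Z$ (the image of the central factor $0\times\Alt_{\mf s}(V)^*$). First I would verify the four hypotheses of Theorem~\ref{thm:ozawa}. Hypothesis~(1): $H = S$ is $\SL(2,\R)$ or $\SU(2)$; the latter is compact hence amenable, so I would take $S = \SL(2,\R)$ here (one should check the statement of the proposition only needs treating the non-amenable case, since for $S = \SU(2)$ the whole group $H(V)/Z\rtimes S$ is amenable-by-solvable, in fact solvable-by-compact, and hence weakly amenable with constant $1$ — wait, that contradicts the claim, so in fact the proposition must be implicitly restricted to $S = \SL(2,\R)$, and I would state this restriction explicitly at the start of the proof). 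With $S = \SL(2,\R)$, hypothesis~(1) holds since $\SL(2,\R)$ contains a free group and is not amenable even as an abstract group. Hypothesis~(2): $N = H(V)/Z$ is a quotient of the two-step nilpotent group $H(V)$, hence nilpotent, hence amenable. Hypothesis~(3): $N_0$ is the (image of the) center, which is characteristic in $N$ and in particular $S$-invariant; it is proper because $\dim V\geq 2$ forces the quotient $N/N_0 \cong V$ to be nontrivial.

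The substantive point is hypothesis~(4): for every $(x,z)\in N\setminus N_0$ — equivalently every $(x,z)$ with $x\neq 0$ — the stabilizer of $(x,z)$ in $S$ is amenable as a discrete group. This is exactly the content of Lemma~\ref{lem:stabilizer}, which I have already available. So after checking (1)--(3), I would simply invoke Lemma~\ref{lem:stabilizer} for (4) and conclude from Theorem~\ref{thm:ozawa} that $G = (H(V)/Z)\rtimes S$ is not weakly amenable in the discrete topology.

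The main obstacle, such as it is, lies in correctly matching the setup of Theorem~\ref{thm:ozawa} to the group at hand: Theorem~\ref{thm:ozawa} is a statement about \emph{discrete} groups, so throughout one works with $\bigl((H(V)/Z)\rtimes S\bigr)_\dd$, and one must be careful that the subgroup $N_0 = \Alt_{\mf s}(V)^*/Z$ really is proper. This could conceivably fail if $V$ admitted no nonzero invariant alternating form and $Z$ were forced to be everything — but $N_0$ being proper only requires $V\neq 0$, which holds since $\dim V\geq 2$; the space $\Alt_{\mf s}(V)^*$ may even be zero, in which case $N_0$ is trivial, still a proper subgroup, and $N = V$ with $S$ acting irreducibly, so the argument goes through unchanged (and indeed reduces to the classical case of $V\rtimes S$). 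A secondary point to get right is the reduction to $S = \SL(2,\R)$: I would remark at the outset that when $S = \SU(2)$ the group $H(V)/Z\rtimes S$ has $S$ amenable and $H(V)/Z$ amenable, hence is amenable and thus weakly amenable with constant $1$, so the interesting (and only) content of the proposition concerns $S = \SL(2,\R)$; alternatively, the intended reading is that the proposition tacitly presupposes $S = \SL(2,\R)$, matching how $H(V)/Z\rtimes S$ arises from Levi decompositions in the proof of Theorem~\ref{thm:discrete}. Either way, once $S = \SL(2,\R)$ is fixed, the proof is a clean application of Theorem~\ref{thm:ozawa} via Lemma~\ref{lem:stabilizer}.
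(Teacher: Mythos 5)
Your core argument --- applying Theorem~\ref{thm:ozawa} with $N=(H(V)/Z)_\dd$, $N_0=(\Alt_{\mf s}(V)^*/Z)_\dd$, and Lemma~\ref{lem:stabilizer} supplying hypothesis~(4) --- is exactly the paper's proof, and your remarks on properness of $N_0$ (including the degenerate case $\Alt_{\mf s}(V)^*=0$) are fine. But there is a genuine error in your reduction: you discard the case $S=\SU(2)$ on the grounds that $\SU(2)$ is compact, hence amenable, so that $H(V)/Z\rtimes\SU(2)$ would be amenable and weakly amenable with constant $1$, concluding that the proposition ``must be implicitly restricted to $S=\SL(2,\R)$.'' This conflates amenability of $\SU(2)$ as a \emph{topological} group with amenability of $\SU(2)_\dd$ as a \emph{discrete} group. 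The proposition is about the discrete topology throughout, and $\SU(2)_\dd$ is \emph{not} amenable: $\SU(2)$ (like $\SO(3)$) contains nonabelian free subgroups, e.g.\ by the Tits alternative or by the classical explicit pairs of rotations underlying the Banach--Tarski paradox, and a discrete group containing a nonabelian free subgroup is never amenable. So hypothesis~(1) of Theorem~\ref{thm:ozawa} holds for $H=S_\dd$ in \emph{both} cases $S=\SL(2,\R)$ and $S=\SU(2)$, there is no contradiction to resolve, and no restriction on $S$ is intended or permissible.

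The omission matters: the $\SU(2)$ case is precisely what feeds the families $D^\R_{2n+1}\rtimes\SU(2)$ and $HU^i_{4n}\rtimes\SU(2)$ in Proposition~\ref{prop:four-cases}, which are needed for Proposition~\ref{prop:not-central} and hence for Theorem~\ref{thm:discrete}; a proof covering only $S=\SL(2,\R)$ would leave the main theorem unproved. The repair is one line: delete the reduction to $\SL(2,\R)$, and justify hypothesis~(1) by noting that both $\SL(2,\R)$ and $\SU(2)$ contain free subgroups and are therefore non-amenable as discrete groups. With that correction, the rest of your argument goes through verbatim and coincides with the paper's.
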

\begin{proof}
We intend to apply Theorem~\ref{thm:ozawa} with $H = S_\dd$, $N = (H(V)/Z)_\dd$ and $N_0 = (\Alt_{\mf s}(V)^*/Z)_\dd$. Clearly, $H$ is not amenable, $N$ is amenable, and $N_0$ is invariant under $H$ (in fact, $H$ acts trivially on $N_0$). It remains to check that every element of $N\setminus N_0$ has amenable stabilizer. This is Lemma~\ref{lem:stabilizer}.
\end{proof}

In the case $S = \SL(2,\R)$, the group $H(V)/Z\rtimes S$ is not simply connected, since $\SL(2,\R)$ is not simply connected. Let $\tilde S = \tilde\SL(2,\R)$ denote the universal covering group of $\SL(2,\R)$. The covering homomorphism $\tilde S\to S$ has kernel isomorphic to the group of integers $\Z$. The group $\tilde S$ acts on $H(V)/Z$ through the action of $S$. Since the stabilizer of $(x,z)\in H(V)/Z$ in $\tilde S$ is an extension by $\Z$ of the stabilizer in $S$, and since amenability is preserved by extensions, the following is immediate from Lemma~\ref{lem:stabilizer}.
\begin{lem}
If $\dim V\geq 2$ and if $(x,z)\in H(V)/Z$ and $x\neq 0$, then the stabilizer of $(x,z)$ in $\tilde S$ is amenable in the discrete topology.
\end{lem}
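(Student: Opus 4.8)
The plan is to obtain the statement immediately from Lemma~\ref{lem:stabilizer} together with the permanence of amenability under group extensions, exactly as the sentence preceding the lemma indicates.

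First I would record that, by construction, $\tilde S$ acts on $H(V)/Z$ through the action of $S$: if $\pi\colon\tilde S\to S$ denotes the covering homomorphism, then $\tilde s.(x,z) = \pi(\tilde s).(x,z)$ for every $\tilde s\in\tilde S$ and every $(x,z)\in H(V)/Z$. Hence an element of $\tilde S$ stabilizes $(x,z)$ if and only if its image under $\pi$ stabilizes $(x,z)$, so the stabilizer of $(x,z)$ in $\tilde S$ is precisely the preimage $\pi^{-1}(\mr{Stab}_S(x,z))$ of the stabilizer of $(x,z)$ in $S$.

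Since $\pi$ is surjective, restricting $\pi$ to this preimage yields a short exact sequence of discrete groups
\[
1\longrightarrow\ker\pi\longrightarrow\mr{Stab}_{\tilde S}(x,z)\longrightarrow\mr{Stab}_S(x,z)\longrightarrow 1 ,
\]
and $\ker\pi\cong\Z$ by the description of the covering $\tilde\SL(2,\R)\to\SL(2,\R)$ recalled above. By Lemma~\ref{lem:stabilizer} the quotient $\mr{Stab}_S(x,z)$ is amenable as a discrete group (this is where the hypotheses $\dim V\geq 2$ and $x\neq 0$ enter), and $\Z$ is amenable; since amenability of discrete groups is preserved under extensions, it follows that $\mr{Stab}_{\tilde S}(x,z)$ is amenable as a discrete group, which is the claim.

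I do not expect any genuine obstacle here: the only two points that need to be pinned down are that the $\tilde S$-action truly factors through $S$ — which is immediate from the way the action was defined — and the standard fact that an extension of an amenable discrete group by an amenable discrete group is again amenable. Everything else is bookkeeping about the covering $\tilde\SL(2,\R)\to\SL(2,\R)$ and its kernel $\Z$.
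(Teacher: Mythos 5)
Your argument is correct and is exactly the paper's: the $\tilde S$-action factors through the covering $\tilde S\to S$ with kernel $\Z$, so the stabilizer in $\tilde S$ is an extension by $\Z$ of the stabilizer in $S$, which is amenable by Lemma~\ref{lem:stabilizer}, and amenability of discrete groups passes to extensions.
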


Applying Theorem~\ref{thm:ozawa} with $H = \tilde S_\dd$, $N = (H(V)/Z)_\dd$ and $N_0 = (\Alt_{\mf s}(V)^*/Z)_\dd$, we obtain the following:
\begin{prop}\label{prop:SL2-covering}
If $\dim V\geq 2$, the group $H(V)/Z\rtimes\tilde S$ is not weakly amenable in the discrete topology.
\end{prop}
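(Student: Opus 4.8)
The plan is to invoke Ozawa's criterion, Theorem~\ref{thm:ozawa}, exactly as the paragraph preceding the proposition announces, with $H = \tilde S_\dd$, $N = (H(V)/Z)_\dd$, and $N_0 = (\Alt_{\mf s}(V)^*/Z)_\dd$, and then to check its four hypotheses one by one. First I would verify (1): the covering homomorphism $\tilde S\to S = \SL(2,\R)$ is surjective and $\SL(2,\R)$ contains a nonabelian free subgroup, so $\tilde S$ is non-amenable; non-amenability does not depend on the topology, so $H = \tilde S_\dd$ is non-amenable. For (2), the group $H(V) = V\times\Alt_{\mf s}(V)^*$ with the prescribed multiplication is two-step nilpotent, hence amenable as a discrete group, and so is its quotient $H(V)/Z$; thus $N$ is amenable. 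For (3), I observe that the action $s.(x,z) = (s.x,z)$ of $S$, and therefore of $\tilde S$ via the covering map, fixes the $\Alt_{\mf s}(V)^*$-coordinate pointwise, so $N_0$ is $H$-invariant (in fact $H$ acts trivially on it); moreover $N_0$ is the set of classes of the form $(0,z)$, and since $\dim V\geq 2$ we have $V\neq 0$, so $N_0$ is a proper subgroup of $N$. Finally, (4) is precisely the Lemma immediately preceding this proposition: an element of $N\setminus N_0$ has the form $(x,z)$ with $x\neq 0$, and its stabilizer in $\tilde S$ is amenable in the discrete topology. With all four hypotheses verified, Theorem~\ref{thm:ozawa} gives that $G = N\rtimes H = H(V)/Z\rtimes\tilde S$ is not weakly amenable in the discrete topology.

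There is no real obstacle here; the proposition is a direct application of Theorem~\ref{thm:ozawa}, since Lemma~\ref{lem:stabilizer} has already been upgraded to the universal cover in the Lemma just above the proposition. The only steps requiring more than a one-line remark are the identification of $N\setminus N_0$ with $\{(x,z):x\neq 0\}$, which follows immediately from the description of the group law on $H(V)/Z$, and the non-amenability of $\tilde S$, which I would record via the surjection onto $\SL(2,\R)$. Accordingly I would keep the proof to a short paragraph that merely names the choice of $H$, $N$, $N_0$ and checks conditions (1)--(4).
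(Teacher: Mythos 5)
Your proposal is correct and follows exactly the route the paper takes: applying Theorem~\ref{thm:ozawa} with $H = \tilde S_\dd$, $N = (H(V)/Z)_\dd$, $N_0 = (\Alt_{\mf s}(V)^*/Z)_\dd$, and using the lemma on stabilizers in $\tilde S$ for hypothesis (4). Your version simply spells out the verification of the four hypotheses in slightly more detail than the paper does.
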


\begin{prop}\label{prop:local}
If $\dim V\geq 2$ and if $G$ is a connected Lie group locally isomorphic to $H(V)/Z\rtimes S$, then $G_\dd$ is not weakly amenable.
\end{prop}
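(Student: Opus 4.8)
The plan is to reduce the statement to the universal cover of $H(V)/Z\rtimes S$ and then invoke the central-quotient inequality \eqref{eq:mod-central}. First I would identify that universal cover. Since $H(V)$ is a simply connected (two-step nilpotent) Lie group --- it is diffeomorphic to $V\times\Alt_{\mf s}(V)^*$ --- and $Z$ is a subspace of its centre, the quotient $H(V)/Z$ is again simply connected. Writing $\tilde S$ for the universal cover of $S$ (so $\tilde S=S$ when $S=\SU(2)$ and $\tilde S=\tilde\SL(2,\R)$ when $S=\SL(2,\R)$), the group $\tilde S$ acts on $H(V)/Z$ through $S$, and the long exact homotopy sequence of the bundle $H(V)/Z\to (H(V)/Z)\rtimes\tilde S\to\tilde S$ shows that $\tilde G:=(H(V)/Z)\rtimes\tilde S$ is connected and simply connected. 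As $\tilde G$ is locally isomorphic to $H(V)/Z\rtimes S$ (it has the same Lie algebra), it is the universal cover of the latter, and hence also of $G$.

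Next, since $G$ is a connected Lie group locally isomorphic to $H(V)/Z\rtimes S$, the structure theory recalled in Section~\ref{sec:Levi} provides a discrete central subgroup $N\leq Z(\tilde G)$ with $G\cong\tilde G/N$.

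Finally I would pass to the discrete topologies. Then $G_\dd\cong\tilde G_\dd/N$ with $N$ central in $\tilde G_\dd$, so \eqref{eq:mod-central} yields
\[
\Lambda_\WA(\tilde G_\dd)\leq\Lambda_\WA(\tilde G_\dd/N)=\Lambda_\WA(G_\dd).
\]
By Proposition~\ref{prop:minimal-groups} (when $S=\SU(2)$, in which case $\tilde G=H(V)/Z\rtimes S$) or by Proposition~\ref{prop:SL2-covering} (when $S=\SL(2,\R)$), the group $\tilde G_\dd$ is not weakly amenable, i.e.\ $\Lambda_\WA(\tilde G_\dd)=\infty$; therefore $\Lambda_\WA(G_\dd)=\infty$, and $G_\dd$ is not weakly amenable.

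The only genuinely delicate point --- and the step I would be most careful about --- is the first one: verifying that $(H(V)/Z)\rtimes\tilde S$ is simply connected, which amounts to checking that $H(V)/Z$ is simply connected and that an extension of a connected, simply connected Lie group by a connected, simply connected Lie group is again connected and simply connected. Once this is secured, the remainder is a mechanical application of \eqref{eq:mod-central} together with the already-established non-weak-amenability of the two ``minimal'' semidirect products.
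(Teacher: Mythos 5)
Your argument is correct and is essentially the paper's own proof: reduce to the universal cover via the central-quotient inequality \eqref{eq:mod-central}, identify that cover as $H(V)/Z\rtimes\SU(2)$ or $H(V)/Z\rtimes\tilde\SL(2,\R)$ according to the case, and conclude with Proposition~\ref{prop:minimal-groups} or Proposition~\ref{prop:SL2-covering}. The extra care you take in verifying simple connectedness of $H(V)/Z$ and of the semidirect product is exactly the (implicit) content of the paper's identification of $\tilde G$.
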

\begin{proof}
Let $\tilde G$ be the universal cover of $G$. Then $G = \tilde G/D$ for some discrete central subgroup $D$ of $\tilde G$. By \eqref{eq:mod-central}, it is enough to prove that $\tilde G_\dd$ is not weakly amenable, and hence we may (and will) assume that $G$ is simply connected.

If $S = \SU(2)$, then the group $H(V)/Z\rtimes S$ is simply connected, so $\tilde G = H(V)/Z\rtimes S$ and we apply Proposition~\ref{prop:minimal-groups}. If $S = \SL(2,\R)$ and $\tilde S = \tilde\SL(2,\R)$, then $H(V)/Z\rtimes\tilde S$ is simply connected, so $\tilde G = H(V)/Z\rtimes\tilde S$ and we apply Proposition~\ref{prop:SL2-covering}.
\end{proof}

For now, let $S = \SL(2,\R)$ and $V = V_n$. If $n = 2m+1$ is odd, the space $\Alt_{\mf s}(V)^*$ is trivial and
\[
H(V)\rtimes S = \R^{2m+1}\rtimes\SL(2,\R).
\]
If $n=2m$ is even, the space $\Alt_{\mf s}(V)^*$ is one dimensional and $H(V)$ is the $2m+1$ dimensional real Heisenberg group $H_{2m+1}$. If $Z = \Alt_{\mf s}(V)^*$, then
\[
H(V)\rtimes S = H_{2m+1}\rtimes\SL(2,\R), \qquad
H(V)/Z \rtimes S = \R^{2m}\rtimes\SL(2,\R).
\]
When $S = \SU(2)$ and $V = V_{2m+1}$, the space $\Alt_{\mf s}(V)^*$ is trivial, and with the notation of \cite{cornulier-jlt} we have
\[
H(V)\rtimes S = D^\R_{2m+1}\rtimes\SU(2).
\]
When $S = \SU(2)$ and $V = V_{4m}$, the space $\Alt_{\mf s}(V)^*$ is three dimensional. If $Z_i\subseteq\Alt_{\mf s}(V)^*$ is a subspace of dimension $3-i$, then with the notation of \cite{cornulier-jlt} we have
\[
H(V)/Z_i\rtimes S = HU^i_{4m}\rtimes\SU(2).
\]

Using the perhaps more iluminating description of the groups $H(V)/Z\rtimes S$ just given, Proposition~\ref{prop:local} translates as

\begin{prop}\label{prop:four-cases}
Let $G$ be a connected Lie group locally isomorphic to one of the following groups:
\begin{itemize}
	\item $D^\R_{2n+1}\rtimes\SU(2)$ for some $i=0,1,2,3$ and some $n\geq 1$;
	\item $HU^i_{4n}\rtimes\SU(2)$ for some $i=0,1,2,3$ and some $n\geq 1$;
	\item $\R^n\rtimes\SL(2,\R)$ for some $n\geq 2$;
	\item $H_{2n+1}\rtimes\SL(2,\R)$ for some $n\geq 1$.
\end{itemize}
Then $G_\dd$ is not weakly amenable.
\end{prop}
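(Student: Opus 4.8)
The plan is to reduce Proposition~\ref{prop:four-cases} to Proposition~\ref{prop:local} by recognizing each of the four families of groups listed as being locally isomorphic to a group of the form $H(V)/Z\rtimes S$ with $S=\SL(2,\R)$ or $S=\SU(2)$ and $\dim V\geq 2$. The dictionary needed for this has already been spelled out in the paragraphs immediately preceding the statement, where the notation $D^\R_{2m+1}$, $HU^i_{4m}$, $H_{2m+1}$ is matched up with the abstract constructions: for $S=\SU(2)$ and $V=V_{2m+1}$ one has $\Alt_{\mf s}(V)^*=0$ and $H(V)\rtimes S=D^\R_{2m+1}\rtimes\SU(2)$; for $S=\SU(2)$ and $V=V_{4m}$ one has $\dim\Alt_{\mf s}(V)^*=3$ and the quotients by subspaces $Z_i$ of dimension $3-i$ give $HU^i_{4m}\rtimes\SU(2)$; for $S=\SL(2,\R)$ and $V=V_{2m+1}$ one has $\Alt_{\mf s}(V)^*=0$ and $H(V)\rtimes S=\R^{2m+1}\rtimes\SL(2,\R)$; and for $S=\SL(2,\R)$ and $V=V_{2m}$ one has $\dim\Alt_{\mf s}(V)^*=1$, so that $H(V)\rtimes S=H_{2m+1}\rtimes\SL(2,\R)$ while $H(V)/Z\rtimes S=\R^{2m}\rtimes\SL(2,\R)$ when $Z=\Alt_{\mf s}(V)^*$.

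Concretely I would argue as follows. Suppose $G$ is a connected Lie group locally isomorphic to one of the four listed groups. In each case I exhibit an irreducible real representation $V$ of $S\in\{\SL(2,\R),\SU(2)\}$ with $\dim V\geq 2$ and a subspace $Z\subseteq\Alt_{\mf s}(V)^*$ such that the listed group is isomorphic to $H(V)/Z\rtimes S$, using the dimension computations of $\Alt_{\mf s}(V)^*$ recalled above. For the $\R^n\rtimes\SL(2,\R)$ family with $n\geq 2$: if $n$ is odd take $V=V_n$ and $Z=0$; if $n$ is even take $V=V_n$ and $Z=\Alt_{\mf s}(V)^*$ (one-dimensional), so that $H(V)/Z\cong\R^n$. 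For $H_{2n+1}\rtimes\SL(2,\R)$ with $n\geq 1$, take $V=V_{2n}$ (dimension $\geq 2$) and $Z=0$. For $D^\R_{2n+1}\rtimes\SU(2)$ with $n\geq 1$, take $V=V_{2n+1}$ and $Z=0$. For $HU^i_{4n}\rtimes\SU(2)$ with $n\geq 1$, take $V=V_{4n}$ and $Z=Z_i$ of dimension $3-i$. In every case $\dim V\geq 2$, so Proposition~\ref{prop:local} applies: any connected Lie group locally isomorphic to $H(V)/Z\rtimes S$, in particular $G$, is not weakly amenable as a discrete group. (I should note that local isomorphism is the right equivalence relation here, since $G$ is assumed only locally isomorphic to the listed group, and Proposition~\ref{prop:local} is already phrased in those terms.)

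I do not expect any genuine obstacle, since all the hard analytic input — the application of Ozawa's criterion, the stabilizer computations, and the passage to the universal cover via \eqref{eq:mod-central} — has been absorbed into Propositions~\ref{prop:minimal-groups}, \ref{prop:SL2-covering}, and \ref{prop:local}. The only point requiring a little care is bookkeeping: making sure that the ranges of the parameters ($n\geq 2$ versus $n\geq 1$, even versus odd, the index $i$) line up exactly with the constraint $\dim V\geq 2$ and with which subspace $Z$ is being quotiented out. For instance, in the $\R^n\rtimes\SL(2,\R)$ case one genuinely needs $n\geq 2$: the representation $V_1$ is the trivial one-dimensional representation, which was explicitly excluded, whereas $\R^n$ for $n\geq 2$ is realized either as $H(V_n)$ with $n$ odd or as $H(V_n)/\Alt_{\mf s}(V_n)^*$ with $n$ even. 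Similarly $H_{2n+1}=H(V_{2n})$ needs $n\geq 1$ so that $\dim V_{2n}=2n\geq 2$. Having checked these, the proposition follows immediately.

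\begin{proof}
In each of the four cases we identify the group in question with one of the form $H(V)/Z\rtimes S$ for $S=\SL(2,\R)$ or $S=\SU(2)$ and an irreducible real representation $V$ of $S$ with $\dim V\geq 2$, and then invoke Proposition~\ref{prop:local}.

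Suppose first that $G$ is locally isomorphic to $\R^n\rtimes\SL(2,\R)$ for some $n\geq 2$. Take $S=\SL(2,\R)$ and $V=V_n$, so $\dim V=n\geq 2$. If $n$ is odd, then $\Alt_{\mf s}(V)^*$ is trivial and $H(V)\rtimes S=\R^n\rtimes\SL(2,\R)$, so $G$ is locally isomorphic to $H(V)/Z\rtimes S$ with $Z=0$. If $n$ is even, then $\Alt_{\mf s}(V)^*$ is one dimensional and, with $Z=\Alt_{\mf s}(V)^*$, we have $H(V)/Z=\R^n$ and $H(V)/Z\rtimes S=\R^n\rtimes\SL(2,\R)$. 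In either case Proposition~\ref{prop:local} gives that $G_\dd$ is not weakly amenable.

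Next suppose $G$ is locally isomorphic to $H_{2n+1}\rtimes\SL(2,\R)$ for some $n\geq 1$. Take $S=\SL(2,\R)$ and $V=V_{2n}$, so $\dim V=2n\geq 2$. Then $\Alt_{\mf s}(V)^*$ is one dimensional and $H(V)$ is the $(2n+1)$-dimensional Heisenberg group $H_{2n+1}$, so with $Z=0$ we have $H(V)/Z\rtimes S=H_{2n+1}\rtimes\SL(2,\R)$. Proposition~\ref{prop:local} applies.

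Now suppose $G$ is locally isomorphic to $D^\R_{2n+1}\rtimes\SU(2)$ for some $n\geq 1$. Take $S=\SU(2)$ and $V=V_{2n+1}$, so $\dim V=2n+1\geq 3$. Then $\Alt_{\mf s}(V)^*$ is trivial and $H(V)\rtimes S=D^\R_{2n+1}\rtimes\SU(2)$, so with $Z=0$ we conclude by Proposition~\ref{prop:local}.

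Finally suppose $G$ is locally isomorphic to $HU^i_{4n}\rtimes\SU(2)$ for some $i\in\{0,1,2,3\}$ and some $n\geq 1$. Take $S=\SU(2)$ and $V=V_{4n}$, so $\dim V=4n\geq 4$. Then $\Alt_{\mf s}(V)^*$ is three dimensional; let $Z_i\subseteq\Alt_{\mf s}(V)^*$ be a subspace of dimension $3-i$. Then $H(V)/Z_i\rtimes S=HU^i_{4n}\rtimes\SU(2)$, and Proposition~\ref{prop:local} shows that $G_\dd$ is not weakly amenable.
\end{proof}
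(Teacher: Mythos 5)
Your proposal is correct and matches the paper's own treatment: the paper establishes exactly this dictionary ($D^\R_{2m+1}$, $HU^i_{4m}$, $\R^n$, $H_{2m+1}$ versus $H(V)/Z$ for the various $V$ and $Z$) in the paragraphs preceding the statement and then simply declares that Proposition~\ref{prop:local} ``translates as'' Proposition~\ref{prop:four-cases}. Your write-up just makes the case-by-case bookkeeping explicit, which is fine.
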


\begin{prop}\label{prop:not-central}
Let $G$ be a connected Lie group, and let $G = RS$ be a Levi decomposition (see Section~\ref{sec:Levi}), where $R$ is the solvable radical and $S$ is a semisimple Levi factor. If $[R,S]\neq 1$, then $G_\dd$ is not weakly amenable.
\end{prop}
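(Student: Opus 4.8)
The plan is to reduce the general case to the specific semidirect products handled in Proposition~\ref{prop:four-cases}. Suppose $[R,S]\neq 1$. Since $S$ acts on $R$ by conjugation, and hence on the Lie algebra $\mf r$ of $R$, the hypothesis $[R,S]\neq 1$ means that the derived action of $\mf s$ on $\mf r$ is nonzero. Decomposing $\mf s = \mf s_1\oplus\cdots\oplus\mf s_n$ into simple ideals, at least one $\mf s_j$ acts nontrivially on $\mf r$. Passing to a subquotient, I would like to isolate a single irreducible nontrivial $\mf s_j$-submodule of $\mf r$ (or of a suitable nilpotent subquotient of $\mf r$) and build from it one of the model groups $H(V)/Z\rtimes S$ with $\dim V\geq 2$.

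The key structural step: consider the nilradical $\mf n$ of $\mf g$ (or the nilpotent radical, i.e.\ $[\mf g,\mf r]$, which is a nilpotent ideal contained in $\mf r$ on which $\mf s$ acts, and which contains everything ``moved'' by $\mf s$). Since $[\mf r,\mf s]\neq 1$ and $[\mf r,\mf s]\subseteq\mf n$, the semisimple part $\mf s$ acts nontrivially on $\mf n$. Because $\mf s$ is semisimple, its action on the abelianization $\mf n/[\mf n,\mf n]$ is completely reducible; if $\mf s$ acts nontrivially there we extract a nontrivial irreducible $\mf s$-submodule $W$ of $\mf n/[\mf n,\mf n]$ through which, after quotienting, some $\mf s_j\cong\mf{sl}(2,\R)$ or $\mf{su}(2)$ acts faithfully and irreducibly (by passing to a simple factor, or rather to $\mf{sl}(2)$-theory: every nontrivial irreducible $\mf s$-module restricted to a principal $\mf{sl}(2)$ is nontrivial). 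If instead $\mf s$ acts trivially on $\mf n/[\mf n,\mf n]$ but nontrivially on $\mf n$, one descends to the first step of the lower central series where the action is nontrivial; this quotient is a two-step nilpotent Lie algebra, and the bracket together with the $\mf s$-action is exactly the algebraic structure encoded by $\Alt_{\mf s}(V)^*$ in the definition of $H(V)$. In this way one produces a quotient Lie algebra of $\mf g$ isomorphic to $(H(V)/Z\rtimes S')$'s Lie algebra, where $S'$ is a simple factor locally $\SL(2,\R)$ or $\SU(2)$ and $\dim V\geq 2$. The corresponding quotient of $\tilde G$ is a connected Lie group locally isomorphic to $H(V)/Z\rtimes S'$, hence $G_\dd$ surjects (after passing to $\tilde G$ via \eqref{eq:mod-central}) onto a non-weakly-amenable group, so $G_\dd$ is not weakly amenable by \eqref{eq:mod-central}.

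More carefully, the logical flow is: (i) pass to the universal cover $\tilde G$, legitimate by \eqref{eq:mod-central}, so $\tilde G$ is a semidirect product $\tilde R\rtimes\tilde S$; (ii) quotient $\tilde G$ by a closed normal subgroup to kill everything in $\tilde R$ except a single $\mf s_j$-stable piece realizing the model; this requires checking that the relevant subalgebra of $\mf g$ is actually an ideal, so that the quotient is a Lie group; (iii) restrict the semisimple part to the simple factor $\mf s_j$ by further quotienting the $\mf s_i$ with $i\neq j$ — again one must verify these form an ideal in the subquotient, which holds because each $\mf s_i$ normalizes $\mf r$ and the $\mf s_i$ commute; (iv) recognize the result as locally isomorphic to one of the groups in Proposition~\ref{prop:four-cases}, and conclude via \eqref{eq:mod-central} and Proposition~\ref{prop:four-cases}. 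Here I need the classification of irreducible real representations of $\SL(2,\R)$ and $\SU(2)$ recalled above, together with the fact that every irreducible $\mf s$-representation remains nontrivial when restricted to a suitable $\mf{sl}(2)$-subalgebra.

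The main obstacle I expect is step (ii)–(iii): carefully arranging that the intermediate subspaces one quotients by are genuine ideals of $\mf g$ (not merely $\mf s$-submodules of $\mf r$), so that all the quotients taken are quotients of Lie groups to which \eqref{eq:mod-central} applies. The brackets within $\mf r$ need not respect a chosen $\mf s$-decomposition, so one must work with the lower central series of $\mf n$ (whose terms \emph{are} ideals of $\mf g$) and only at the final two-step nilpotent stage invoke complete reducibility of the $\mf s$-action to split off the model piece; one then checks the complementary $\mf s$-isotypic summands together with the other simple factors $\mf s_i$ assemble into an ideal whose quotient is exactly $H(V)/Z\rtimes S'$ up to local isomorphism. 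Verifying that the induced cocycle on $V$ lands in $\Alt_{\mf s}(V)^*$ (alternating and $\mf s$-invariant) is immediate from the Jacobi identity and $\mf s$-equivariance of the bracket, so that part is routine.
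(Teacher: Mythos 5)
There is a genuine gap, and it lies in the permanence property you invoke, not in the Lie-theoretic bookkeeping you worry about. Your plan is to exhibit the model group $H(V)/Z\rtimes S'$ as a \emph{quotient} of $\tilde G$ and then conclude non-weak amenability of $\tilde G_\dd$ (hence of $G_\dd$) ``by \eqref{eq:mod-central}''. But \eqref{eq:mod-central} says $\Lambda_\WA(G)\leq\Lambda_\WA(G/Z)$ for $Z$ \emph{central} in a discrete group, so it only transfers non-weak amenability \emph{from the group to a central quotient}, never from a quotient back to the group; moreover the kernels you need to kill (a complement inside the radical and the simple factors $\mf s_i$, $i\neq j$) are not central, and no permanence result in the paper (nor any known one) asserts that weak amenability of a discrete group passes to quotients by such subgroups. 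The only legitimate use of \eqref{eq:mod-central} here is the first one, passing from $G$ to its universal cover, which is exactly how Proposition~\ref{prop:local} is proved. So as written, the final step of your argument does not follow, independently of whether the intermediate subspaces can be arranged to be ideals (and that arrangement is itself doubtful: the complementary isotypic summands at the two-step nilpotent stage need not bracket into an ideal).

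The paper avoids all of this by going in the hereditary direction that \emph{is} available for discrete groups: weak amenability passes to subgroups (as noted in the proof of Lemma~\ref{lem:countable}). Citing \cite[Propositions~3.4 and~3.8]{cornulier-jlt}, it extracts from $[R,S]\neq 1$ a connected Lie \emph{subgroup} $H\leq G$ locally isomorphic to one of the groups of Proposition~\ref{prop:four-cases}; since $H_\dd$ is not weakly amenable and $H_\dd\leq G_\dd$, neither is $G_\dd$. Reworking your construction in this direction also dissolves your ``main obstacle'': to realize the model inside $G$ you only need the span of an irreducible piece $V\subseteq\mf r$ (plus the relevant central directions) together with a copy of $\mf{sl}(2,\R)$ or $\mf{su}(2)$ in $\mf s$ to form a \emph{subalgebra} of $\mf g$, not an ideal, and the corresponding analytic subgroup does the job; this is precisely the content of de~Cornulier's propositions. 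If you prefer not to cite them, your sketch of the representation-theoretic extraction could be completed, but the conclusion must run through subgroups, not quotients.
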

\begin{proof}
This follows basically from structure theory of Lie algebras together with Proposition~\ref{prop:four-cases}. Indeed, from the assumption $[R,S]\neq 1$ it follows from \cite[Proposition~3.4]{cornulier-jlt} and \cite[Proposition~3.8]{cornulier-jlt} that $G$ contains a connected Lie subgroup $H$ locally isomorphic to one of the groups listed in Proposition~\ref{prop:four-cases}. Since $H_\dd$ is not weakly amenable, $G_\dd$ is not weakly amenable.
\end{proof}

\begin{thm}\label{thm:discrete}
Let $G$ be a connected Lie group, and let $G_\dd$ denote the group $G$ equipped with the discrete topology. The following are equivalent.
\begin{enumerate}
	\item[(1)] $G$ is locally isomorphic to $R\times\SO(3)^a\times\SL(2,\R)^b\times\SL(2,\C)^c$, for a solvable connected Lie group $R$ and integers $a,b,c$.
	\item[(2)] $G_\dd$ is weakly amenable with constant 1.
	\item[(3)] $G_\dd$ is weakly amenable.
	\item[(4)] Every countable subgroup of $G$ is weakly amenable with constant $1$.
	\item[(5)] Every countable subgroup of $G$ is weakly amenable.
\end{enumerate}
\end{thm}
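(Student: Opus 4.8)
The plan is to prove the cycle of implications $(1)\Rightarrow(2)\Rightarrow(3)\Rightarrow(5)$ and $(2)\Rightarrow(4)\Rightarrow(5)$ trivially, and then close the loop by proving $(5)\Rightarrow(1)$ (equivalently $(3)\Rightarrow(1)$, since $(3)$ and $(5)$ coincide by Lemma~\ref{lem:countable}). The implications $(2)\Rightarrow(3)$, $(2)\Rightarrow(4)$, $(4)\Rightarrow(5)$, $(3)\Rightarrow(5)$ are immediate from the definitions and Lemma~\ref{lem:countable}, so the real content is $(1)\Rightarrow(2)$ and $(5)\Rightarrow(1)$.

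For $(1)\Rightarrow(2)$: assume $G$ is locally isomorphic to $R\times\SO(3)^a\times\SL(2,\R)^b\times\SL(2,\C)^c$. The point is that being weakly amenable with constant $1$ as a discrete group should be read off from the factors. First I would reduce to $G_\dd$ itself via the covering picture of Section~\ref{sec:Levi}: $G$ is a quotient $\tilde G/D$ of the simply connected cover by a discrete central subgroup $D$, and by \eqref{eq:mod-central} it suffices to show $\tilde G_\dd$ is weakly amenable with constant $1$; but $\tilde G = \tilde R\times\widetilde{\SO(3)}{}^a\times\widetilde{\SL(2,\R)}{}^b\times\widetilde{\SL(2,\C)}{}^c$ (a genuine direct product of simply connected groups). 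A solvable connected Lie group made discrete is amenable, hence weakly amenable with constant $1$. Each of $\widetilde{\SO(3)}=\SU(2)$, $\widetilde{\SL(2,\R)}$, $\widetilde{\SL(2,\C)}$ is weakly amenable with constant $1$ as a discrete group: for $\SU(2)=\SO(3)$ and $\SL(2,\R)$ and $\SL(2,\C)$ this is Theorem~\ref{thm:simple-discrete}, and passing to the universal cover only changes the group by a central $\Z$ (resp.\ is already simply connected for $\SU(2)$ and $\SL(2,\C)$), so \eqref{eq:mod-central} again keeps the constant at $1$. Then \eqref{eq:product} (which holds for discrete groups) multiplies all these $1$'s to give $\Lambda_\WA(\tilde G_\dd)=1$, and \eqref{eq:mod-central} gives $\Lambda_\WA(G_\dd)=1$.

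For $(5)\Rightarrow(1)$, which is the main obstacle: assume $G_\dd$ is weakly amenable (equivalently every countable subgroup is). Take a Levi decomposition $G=RS$ with $R$ the solvable radical and $S$ a semisimple Levi factor, and write $S$ as locally isomorphic to $S_1\times\cdots\times S_n$ with the $S_j$ connected simple. By Proposition~\ref{prop:not-central}, weak amenability of $G_\dd$ forces $[R,S]=1$, so $G$ is locally isomorphic to the direct product $R\times S_1\times\cdots\times S_n$. Now each $S_j$, viewed as a connected Lie subgroup of $G$, has $S_{j,\dd}$ weakly amenable (a subgroup of the weakly amenable discrete group $G_\dd$), so by Theorem~\ref{thm:simple-discrete} each $S_j$ is locally isomorphic to $\SO(3)$, $\SL(2,\R)$, or $\SL(2,\C)$. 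Collecting the factors of each type as $\SO(3)^a$, $\SL(2,\R)^b$, $\SL(2,\C)^c$ gives exactly statement $(1)$. The delicate point is justifying "$[R,S]=1$ implies $G$ locally isomorphic to the direct product $R\times S$" at the level of Lie algebras: if $[\mathfrak r,\mathfrak s]=0$ then $\mathfrak g=\mathfrak r\oplus\mathfrak s$ as a direct sum of ideals, hence $\mathfrak g\cong\mathfrak r\oplus\mathfrak s_1\oplus\cdots\oplus\mathfrak s_n$, and local isomorphism of Lie groups is equivalent to isomorphism of Lie algebras (cited in Section~\ref{sec:Levi}); one must also note that a connected Lie group with solvable Lie algebra is a solvable Lie group, so the factor $R$ is of the required type. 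A secondary subtlety worth spelling out is that passing to the subgroup $S_j\leq G$ and invoking Theorem~\ref{thm:simple-discrete} is legitimate: $S_j$ with its own Lie group topology need not be closed in $G$, but as an abstract group $S_j$ is a subgroup of $G$, and weak amenability of a discrete group passes to arbitrary subgroups, so $\Lambda_\WA((S_j)_\dd)<\infty$ and Theorem~\ref{thm:simple-discrete} applies to the simple Lie group $S_j$.

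In summary, the skeleton is: (i) trivial implications and the reduction $(3)\Leftrightarrow(5)$ via Lemma~\ref{lem:countable}; (ii) $(1)\Rightarrow(2)$ by lifting to simply connected covers, then using amenability of solvable groups, Theorem~\ref{thm:simple-discrete} for the three simple pieces, and the permanence properties \eqref{eq:product} and \eqref{eq:mod-central}; (iii) $(5)\Rightarrow(1)$ by Proposition~\ref{prop:not-central} to kill the $[R,S]\neq1$ case, then Theorem~\ref{thm:simple-discrete} applied to each simple Levi factor. I expect essentially all the difficulty to have been front-loaded into Proposition~\ref{prop:not-central} (and hence into Proposition~\ref{prop:four-cases} and the structure-theoretic input from \cite{cornulier-jlt}); given those, the proof of Theorem~\ref{thm:discrete} itself is a bookkeeping argument assembling the cited permanence properties.
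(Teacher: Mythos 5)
Your skeleton (trivial implications, Lemma~\ref{lem:countable}, and the contrapositive of (3)$\Rightarrow$(1) via Proposition~\ref{prop:not-central} and Theorem~\ref{thm:simple-discrete} applied to each simple Levi factor) agrees with the paper, and that part is fine. However, there is a genuine gap in your proof of (1)$\Rightarrow$(2): you apply \eqref{eq:mod-central} in the wrong direction. For a discrete group with central subgroup, \eqref{eq:mod-central} states $\Lambda_\WA(G)\leq\Lambda_\WA(G/Z)$, i.e.\ a bound for a central \emph{quotient} yields a bound for the extension, not conversely. Writing $G=\tilde G/D$ with $D$ central in the universal cover, the inequality reads $\Lambda_\WA(\tilde G_\dd)\leq\Lambda_\WA(G_\dd)$, so proving $\Lambda_\WA(\tilde G_\dd)=1$ gives no information about $G_\dd$. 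The reverse inequality (weak amenability descending to quotients by \emph{infinite} central subgroups) is not among the available permanence properties; only compact normal subgroups \eqref{eq:mod-compact} can be divided out, and avoiding exactly this issue is a recurring theme of the paper. (Your inner application of \eqref{eq:mod-central}, bounding $\Lambda_\WA$ of the universal cover of $\SL(2,\R)$ made discrete by that of $\SL(2,\R)_\dd$, is in the valid direction; it is the final descent from $\tilde G_\dd$ back to $G_\dd$ that fails. Note for contrast that Proposition~\ref{prop:local} uses \eqref{eq:mod-central} legitimately, because there non-weak amenability is transferred from the cover to the quotient, i.e.\ the inequality is used as stated.)

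The repair is the paper's route, which only ever passes to quotients of the group whose constant is being bounded. Since (1) forces $[\mf r,\mf s]=0$, the Levi factor $S$ is normal in $G$ and the abstract quotient $G_\dd/S_\dd$ is a quotient of the solvable group $R_\dd$, hence amenable; by \eqref{eq:co-amenable} it therefore suffices to show $\Lambda_\WA(S_\dd)=1$. Now apply \eqref{eq:mod-central} to $S_\dd$ and its center $Z(S)$ --- this is the valid direction, since $S/Z(S)$ is the quotient --- to reduce to the centerless group $S/Z(S)\cong\SO(3)^a\times\PSL(2,\R)^b\times\PSL(2,\C)^c$, and conclude with \eqref{eq:product} and Theorem~\ref{thm:simple-discrete}. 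With this replacement for (1)$\Rightarrow$(2), the rest of your argument goes through as written.
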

\begin{proof}
Let $G =RS$ be a Levi decomposition of $G$ (see Section~\ref{sec:Levi}).

(1)$\implies$(2): If $S$ is a semisimple Levi factor in $G$, then by assumption $S$ is normal in $G$, and the group $G_d/S_d$ is solvable, since it is a quotient of the solvable group $R_d$. By \eqref{eq:co-amenable}, it is enough to show that $S_\dd$ is weakly amenable with constant 1.

Using \eqref{eq:mod-central}, we may assume that the center of $S$ is trivial. Then $S$ is direct product of factors $\SO(3)$, $\PSL(2,\R)$, and $\PSL(2,\C)$. An application of \eqref{eq:product} and Theorem~\ref{thm:simple-discrete} shows that $S_\dd$ is weakly amenable with constant 1.



(2)$\implies$(4): This is clear.

(4)$\implies$(5): This is clear.

(5)$\implies$(3): This is Lemma~\ref{lem:countable}.

(3)$\implies$(1): Suppose $G$ does not satisfy (1). If $[R,S]\neq 1$, then Proposition~\ref{prop:not-central} shows that $G_\dd$ is not weakly amenable. Otherwise $[R,S] = 1$ and $S$ contains a simple Lie subgroup not locally isomorphic to $\SO(3)$, $\SL(2,\R)$, or $\SL(2,\C)$. It then follows from Theorem~\ref{thm:simple-discrete} that $S_\dd$ is not weakly amenable, and hence $G_\dd$ too is not weakly amenable.
%
\end{proof}

\section{Weak amenability of Lie groups with and without topology}

As a consequence of Theorem~\ref{thm:discrete}, we can answer (part of) Question~1.8 in \cite{MR3345044} affirmatively in the case of connected Lie groups. Indeed, we show below that, for a connected Lie group $G$, if $G_\dd$ is weakly amenable then $G$ too is weakly amenable. We first establish a few lemmas.

\begin{lem}\label{lem:cocompact}
Let $m,n$ be non-negative integers, and let $D\subseteq \R^m\times \Z^n$ be a discrete subgroup. There is a discrete subgroup $D'\subseteq \R^m\times \Z^n$ such that $D\subseteq D'$ and $D'$ is cocompact in $\R^m\times \Z^n$.
\end{lem}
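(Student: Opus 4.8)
The plan is to argue by induction on $n$. The case $n=0$ is the standard fact that any discrete subgroup $D$ of $\R^m$ is contained in a lattice: such a $D$ is free abelian of some rank $r\le m$, a $\Z$-basis of $D$ can be completed to an $\R$-basis $v_1,\dots,v_m$ of $\R^m$, and then $D'=\Z v_1+\cdots+\Z v_m$ is a lattice in $\R^m$ with $D\subseteq D'$ and $\R^m/D'\cong(\R/\Z)^m$ compact.

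For the inductive step, write $\R^m\times\Z^n=\R^m\times\Z^{n-1}\times\Z$ and let $q\colon\R^m\times\Z^n\to\Z$ be the projection onto the last coordinate, so that $q(D)=k\Z$ for some integer $k\ge 0$. If $k=0$, then $D$ is contained in $N:=\R^m\times\Z^{n-1}\times\{0\}$, and the inductive hypothesis applied inside $\R^m\times\Z^{n-1}$ yields a cocompact discrete subgroup $D_1$ with $D\subseteq D_1\subseteq\R^m\times\Z^{n-1}$; then $D'=D_1\times\Z$ is the required subgroup. If $k\ge 1$, choose $d_0\in D$ with $q(d_0)=k$; since $D/(D\cap\ker q)\cong q(D)=k\Z$ is free, the extension $0\to D\cap\ker q\to D\to k\Z\to 0$ splits, so $D=(D\cap\ker q)\oplus\Z d_0$ with $D\cap\ker q\subseteq N$. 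Again the inductive hypothesis gives a cocompact discrete $D_1$ with $D\cap\ker q\subseteq D_1\subseteq\R^m\times\Z^{n-1}$, and we set $D'=(D_1\times\{0\})+\Z d_0$. In all cases the inclusions $D\subseteq D'\subseteq\R^m\times\Z^n$ are immediate from the construction.

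It remains to check that $D'$ is discrete and cocompact. Here I would invoke two elementary facts about a closed subgroup $N$ of a topological group $B$ and a subgroup $\Gamma\le B$ with image $\bar\Gamma$ in $B/N$: (i) if $\Gamma\cap N$ is discrete in $N$ and $\bar\Gamma$ is discrete in $B/N$, then $\Gamma$ is discrete in $B$; and (ii) if $N/(\Gamma\cap N)$ and $(B/N)/\bar\Gamma$ are compact, then $B/\Gamma$ is compact. In the case $k=0$ this is trivial, since $D'=D_1\times\Z$ and $(\R^m\times\Z^n)/D'\cong(\R^m\times\Z^{n-1})/D_1$. In the case $k\ge 1$, apply (i) and (ii) with $\Gamma=D'$ and $N=\ker q=\R^m\times\Z^{n-1}\times\{0\}$, which is closed in $B=\R^m\times\Z^n$: because $q(d_0)=k\ne 0$ one has $D'\cap N=D_1\times\{0\}$ and $\bar{D'}=q(D')=k\Z$, the latter being discrete with finite quotient $\Z/k\Z$, while $N/(D'\cap N)=(\R^m\times\Z^{n-1})/D_1$ is compact by the inductive hypothesis. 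The only step carrying any real content is this subcase $k\ge 1$: one adjoins a single element $d_0$ to the lattice $D_1\times\{0\}$, which spans only the ``hyperplane'' $N$, and the point to get right is precisely that the condition $q(d_0)\ne 0$ forces both $D'\cap N=D_1\times\{0\}$ and $\bar{D'}=k\Z$, so that (i) and (ii) apply. Everything else — the base case, the case $k=0$, and the inclusions $D\subseteq D'\subseteq\R^m\times\Z^n$ — is bookkeeping.
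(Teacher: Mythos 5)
Your proof is correct, but it takes a genuinely different route from the paper. The paper's argument is a one-step application of the structure theorem for compactly generated locally compact abelian groups (\cite[Theorem~9.8]{MR551496}): since $(\R^m\times\Z^n)/D$ is compactly generated, it is isomorphic to $\R^a\times\Z^b\times C$ with $C$ compact, and the preimage in $\R^m\times\Z^n$ of the discrete cocompact subgroup $\Z^a\times\Z^b\times\{0\}$ is the desired $D'$. Your induction on $n$ avoids that structure theorem entirely, at the price of invoking the classical fact that a discrete subgroup of $\R^m$ is generated by $\R$-linearly independent vectors (hence extends to a lattice) for the base case, and of checking by hand, via your facts (i) and (ii) applied to $N=\ker q$, that adjoining the element $d_0$ with $q(d_0)=k\neq 0$ preserves discreteness and yields cocompactness; your identifications $D'\cap N=D_1\times\{0\}$ and $\overline{D'}=k\Z$ are exactly the right points to verify, and the splitting of $D\to k\Z$ is automatic since $k\Z$ is free. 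One small caveat: fact (ii) as stated for arbitrary topological groups is delicate (one needs, e.g., local compactness to replace a compact quotient by a compact set of coset representatives), but since $\R^m\times\Z^n$ is locally compact, and indeed a compact fundamental set can be written down explicitly in your situation ($K_1\times\{0,\dots,k-1\}$ with $K_1D_1=\R^m\times\Z^{n-1}$), this is harmless. In short: the paper's proof is shorter and leans on a big classification theorem; yours is longer but elementary and self-contained.
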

\begin{proof}
Our proof is an application of the characterization of compactly generated, locally compact abelian groups (see \cite[Theorem~9.8]{MR551496}). As $\R^m\times\Z^n$ is compactly generated, so is the quotient $(\R^m\times\Z^n)/D$. Therefore the quotient is of the form $\R^a \times \Z^b \times C$, where $a$ and $b$ are integers and $C$ is a compact abelian group. Clearly, $\R^a \times \Z^b \times C$ has a cocompact discrete subgroup, $\Z^a\times\Z^b\times\{0\}$, and its preimage in $\R^m\times\Z^n$ is a discrete, cocompact subgroup which contains $D$.
\end{proof}


Our next lemma establishes the existence of lattices in certain Lie groups. There are well-known results of Malcev about existence of lattices in nilpotent Lie groups and of Borel about existence of lattices in semisimple Lie groups  (see \cite[Theorem~2.12]{MR0507234} and \cite[Theorem~14.1]{MR0507234}). However, we are interested in some intermediate cases such as the following example, which we have included to give the reader an intuition about the succeeding proof.

\begin{exam}
Fix an irrational number $\theta$. Let $H$ be the universal covering group of $\SL(2,\R)$. Its center is infinite cyclic, and we let $z$ denote a generator of the center of $H$. Consider the group $D = \{( -m - n\theta, z^m, z^n) \mid m,n\in\Z\}$ which is central in $\R\times H\times H$, and let $G$ be the quotient group $G = (\R\times H\times H)/D$. We will describe a lattice in $G$.

The group $\SL(2,\R)$ admits a lattice $F$ isomorphic to the free group on two generators. By freeness, $F$ lifts to a subgroup $\tilde F$ of $H$. Then $(\Z\times \tilde F\times\tilde F)D$ is a lattice in $\R\times H\times H$, and it obviously contains $D$, so it factors down to a lattice in $(\R\times H\times H\times)/D$.
\end{exam}

\begin{lem}\label{lem:existence-of-lattice}
A connected Lie group locally isomorphic to
\[
\R^m\times\SL(2,\R)^n,
\]
where $m$ and $n$ are non-negative integers, contains a lattice.
\end{lem}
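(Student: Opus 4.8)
The plan is to reduce the general case to its simply connected cover and then build a lattice explicitly using the product structure. Let $G$ be a connected Lie group locally isomorphic to $\R^m\times\SL(2,\R)^n$. Its universal cover is $\tilde G = \R^m\times H^n$, where $H = \tilde\SL(2,\R)$, and $G = \tilde G/D$ for some discrete central subgroup $D\subseteq Z(\tilde G) = \R^m\times Z(H)^n$. Since $Z(H)$ is infinite cyclic, $Z(\tilde G)\cong\R^m\times\Z^n$. The strategy is: (i) find a lattice $\tilde\Gamma$ in $\tilde G$ that contains $D$; then (ii) the image of $\tilde\Gamma$ in $G = \tilde G/D$ is a lattice in $G$, because the covering homomorphism $\tilde G\to G$ maps the $\tilde G$-invariant probability measure on $\tilde G/\tilde\Gamma$ to a $G$-invariant probability measure on $G/(\tilde\Gamma/D)$ (the homogeneous spaces are naturally identified).

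For step (i), first enlarge $D$ using Lemma~\ref{lem:cocompact}: applied to the discrete subgroup $D\subseteq Z(\tilde G)\cong\R^m\times\Z^n$, it produces a discrete cocompact subgroup $D'\subseteq Z(\tilde G)$ with $D\subseteq D'$. It suffices to find a lattice in $\tilde G$ containing $D'$, since such a lattice then also contains $D$. Now I build the lattice in $\tilde G = \R^m\times H^n$ by combining a lattice in the $\SL(2,\R)$-factors with the central subgroup $D'$. As in the Example, $\SL(2,\R)$ admits a lattice $F$ isomorphic to a free group (e.g.\ a free group on two generators), and by freeness $F$ lifts to a subgroup $\tilde F\le H$ mapping isomorphically onto $F$. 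Then $\tilde F^n$ is a discrete subgroup of $H^n$. I claim $\Gamma_0 := (\{0\}\times\tilde F^n)\cdot D'$ is a lattice in $\tilde G$: it is a subgroup since $D'$ is central; it is discrete because $D'$ is discrete in $Z(\tilde G)$ and $\tilde F^n$ is discrete in $H^n$ and one can check that the product stays discrete (the only overlap issues occur in $Z(H)^n$, where $\tilde F^n\cap Z(H)^n$ is trivial by freeness of $F$); and $\tilde G/\Gamma_0$ has finite invariant volume because $\tilde G/D'$ is $(\R^m/\text{(cocompact)})\times(H/Z(H))^n\cdot(\text{compact})$-like — more precisely, $\tilde G/\Gamma_0$ fibers over the compact group $Z(\tilde G)/D'$ times $\R^m/(\text{projection of }D')$ with fibers $\prod (H/\tilde F)$-type pieces of finite volume, all of which have finite total invariant measure. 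A cleaner way: $\tilde G/\Gamma_0$ is a finite-volume quotient because it maps onto the compact $Z(\tilde G)/D'$ and onto $\SL(2,\R)^n/F^n$ (finite volume), and the remaining directions are accounted for by the cocompactness of $D'$.

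The main obstacle I anticipate is verifying carefully that $\Gamma_0$ is both discrete and of finite covolume simultaneously — the central twisting means $\Gamma_0$ is not literally a direct product, so one cannot just cite Mal'cev/Borel. The right bookkeeping is to note that $\Gamma_0\cap Z(\tilde G) = D'$ (using freeness, which forces $\tilde F^n\cap Z(\tilde G)$ to be trivial), and that the image of $\Gamma_0$ in $\tilde G/Z(\tilde G) = \SL(2,\R)^n$ is exactly $F^n$, which is a lattice there. Then discreteness of $\Gamma_0$ follows from discreteness of $D'$ in $Z(\tilde G)$ together with discreteness of $F^n$ in $\SL(2,\R)^n$ (a short exact sequence argument), and finite covolume follows from $\mathrm{vol}(\tilde G/\Gamma_0) = \mathrm{vol}(Z(\tilde G)/D')\cdot\mathrm{vol}(\SL(2,\R)^n/F^n) < \infty$ via the fibration $Z(\tilde G)/D' \to \tilde G/\Gamma_0 \to \SL(2,\R)^n/F^n$. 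Once $\Gamma := \Gamma_0$ (which contains $D$) is in hand, step (ii) gives the desired lattice $\Gamma/D$ in $G = \tilde G/D$, completing the proof.
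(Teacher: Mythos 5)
Your proposal is correct and takes essentially the same route as the paper: enlarge $D$ to a cocompact discrete $D'\subseteq Z(\tilde G)$ via Lemma~\ref{lem:cocompact}, lift a free lattice to $\tilde F\leq H$ with $\tilde F\cap Z(H)=\{1\}$, and obtain discreteness and finite covolume from the short exact sequence with compact central fiber $Z(\tilde G)/D'$ over a finite-volume base; your lattice $\tilde F^n D'$ in $\tilde G$ projects to exactly the lattice the paper constructs in the quotient. The only (cosmetic) slip is that $\tilde G/Z(\tilde G)$ is $\PSL(2,\R)^n$, not $\SL(2,\R)^n$, so the image of $\Gamma_0$ there is the image $\bar F^n$ of $F^n$ in $\PSL(2,\R)^n$ (still a lattice, since $F$ is torsion-free), which does not affect the argument.
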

\begin{proof}
We first introduce some notation. For any Lie group $L$, let $Z(L)$ denote the center of $L$. We use $1$ to denote the neutral element (or $0$ for the group $\R$). Let $H$ be the universal covering group of $\SL(2,\R)$. Its center $Z(H)$ is infinite cyclic. 

Set $\tilde G = \R^m\times H^n$. Then $\tilde G$ is a simply connected and connected Lie group, and any connected Lie group $G$ locally isomorphic to $\R^m\times\SL(2,\R)^n$ is of the form $G=\tilde G / D$ for some discrete central subgroup $D$ of $\tilde G$. Let $\pi\colon \tilde G\to G$ be the quotient homomorphism $\pi(x) = xD$.

Suppose $D\subseteq D'$ for some other discrete central subgroup $D'$ in $\tilde G$ and that $\tilde G/D'$ contains a lattice. Then the preimage under $\tilde G/D \to \tilde G/D'$ of any lattice in $\tilde G/D'$ is a lattice in $\tilde G/D$. The center of $\tilde G$ is $Z(\tilde G) = \R^m \times Z(H)^n \simeq \R^m\times\Z^n$, so by Lemma~\ref{lem:cocompact} we may without loss of generality suppose that $D$ is discrete and cocompact in $Z(\tilde G)$.

The quotient $H/Z(H)$ is $\PSL(2,\R)$, and it is well-known that $\PSL(2,\R)$ has a lattice $F$ isomorphic to a free group on two generators (see e.g. \cite[Example~B.2.5(iv)]{MR2415834}). By the universal property of free groups, there is a subgroup $\tilde F\subseteq H$ such that the quotient map $H\to\PSL(2,\R)$ maps $\tilde F$ bijectively onto $F$. The preimage of $F$ in $H$ is $ \tilde F Z(H)$, which is a lattice in $H$. Also, $\tilde F\cap Z(H) = \{1\}$.

Consider the subgroup $\Gamma = \{0\}^m \times \tilde F^n$ in $\tilde G$. We will show that $\pi(\Gamma) = \Gamma D/D$ is a lattice in $G$. 


The group $\Gamma D$ is discrete in $\tilde G$: Since it is a countable subgroup, it is enough to see that $\Gamma D$ is closed in $\tilde G$. Now, $\Gamma D$ is clearly closed in $\Gamma Z(\tilde G)$, which is closed in $\tilde G$ since $\Gamma Z(\tilde G) = \R^m \times ( \tilde F Z(H))^n$.

The group $\pi(\Gamma)$ is discrete in $G$: As $\pi$ is an open map, $\pi(W)$ is an open set in $G$ and $\pi(\Gamma) \cap \pi(W) = \{1\}$. Indeed, if $w\in W$ and $\gamma\in\Gamma$ satisfy $\pi(w) = \pi(\gamma)$, then it follows that $w\in \Gamma D$ so $w = 1$. Thus, $\pi(\Gamma)$ is discrete in $G$.

The group $\pi(\Gamma)$ has finite covolume in $G$: Let $\psi\colon G\to G/Z(G)$ be the quotient homomorphism. As $D$ is discrete and $\tilde G$ is connected, $Z(G) = Z(\tilde G)/ D$. We thus have isomorphisms
\[
G/Z(G) \simeq \tilde G/Z(\tilde G) \simeq \PSL(2,\R)^n,
\]
and under these isomorphisms $\psi\pi(\Gamma) = F^n$. As $F$ is a lattice in $\PSL(2,\R)$, there is a Borel set (even a Borel fundamental domain) $\Omega\subseteq G/Z(G)$ of finite measure such that $\Omega (\psi\pi(\Gamma)) = G/Z(G)$ (see \cite[Proposition~B.2.4]{MR2415834}). By outer regularity, we may assume that $\Omega$ is in addition open (but no longer a fundamental domain). The inverse image $\psi^{-1}(\Omega) \subseteq G$ is then also open and $\psi^{-1}(\Omega)\pi(\Gamma) = G$. As $\psi^{-1}(\Omega)$ is open, its characteristic function is lower semicontinuous, and it follows from Weil's integration formula for lower semicontinuous functions (see \cite[(3.3.13)]{MR1802924}) that the Haar measure of $\psi^{-1}(\Omega)$ is the Haar measure of $\Omega$ multiplied by the Haar measure of $Z(G)$. As $D$ is cocompact in $Z(\tilde G)$, the center $Z(G) = Z(\tilde G) / D$ is compact. Therefore $Z(G)$ has finite Haar measure, and in conclusion $\psi^{-1}(\Omega)$ has finite Haar measure.

By \cite[Proposition~B.2.4]{MR2415834} it follows that $\pi(\Gamma)$ is a lattice in $G$, and this completes the proof.
\end{proof}

\begin{thm}\label{thm:non-discrete}
Let $G$ be a connected Lie group locally isomorphic to
\[
G\approx R\times\SO(3)^a\times\SL(2,\R)^b\times\SL(2,\C)^c,
\]
for a solvable connected Lie group $R$ and integers $a,b,c$. Then $G$ is weakly amenable with constant 1, i.e., $\Lambda_\WA(G) = 1$.
\end{thm}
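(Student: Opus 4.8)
The plan is to use the permanence properties of Section~\ref{sec:prelim} to strip $G$ down to a group locally isomorphic to $\R^m\times\SL(2,\R)^n$, where Lemma~\ref{lem:existence-of-lattice} and the lattice trick apply. First I would fix a Levi decomposition $G=RS$. By hypothesis $\mathfrak g$ is a direct sum of ideals $\mathfrak r\oplus\mathfrak{so}(3)^a\oplus\mathfrak{sl}(2,\R)^b\oplus\mathfrak{sl}(2,\C)^c$, so $[\mathfrak r,\mathfrak s]=0$, hence $[R,S]=1$ and $S$ is normal in $G$. As $S$ need not be closed I replace it by its closure $\bar S$, a closed normal subgroup with $G/\bar S$ a quotient of $R$, hence amenable; thus $\Lambda_\WA(G)=\Lambda_\WA(\bar S)$ by \eqref{eq:co-amenable}. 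Since $R$ centralizes $S$ it also centralizes $\bar S$, so the ideal $\mathfrak v:=\mathrm{Lie}(\bar S)\cap\mathfrak r$ lies in the center of $\mathrm{Lie}(\bar S)$; hence $\mathrm{Lie}(\bar S)=\mathfrak v\oplus\mathfrak s$ with $\mathfrak v\cong\R^m$ abelian and central, so $\bar S\approx\R^m\times\SO(3)^a\times\SL(2,\R)^b\times\SL(2,\C)^c$. Replacing $G$ by $\bar S$, I may assume $\mathfrak g=\R^m\oplus\mathfrak{so}(3)^a\oplus\mathfrak{sl}(2,\R)^b\oplus\mathfrak{sl}(2,\C)^c$ with $\R^m$ central. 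The analytic subgroup $K$ of the ideal $\mathfrak{so}(3)^a$ is a continuous image of the compact group $\SU(2)^a$, hence a compact normal subgroup, so by \eqref{eq:mod-compact} I may further replace $G$ by $G/K\approx\R^m\times\SL(2,\R)^b\times\SL(2,\C)^c$.

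Next I would write $G=\tilde G/D$, where $\tilde G=\R^m\times\tilde\SL(2,\R)^b\times\SL(2,\C)^c$ is the (simply connected) universal cover and $D\subseteq Z(\tilde G)=\R^m\times\Z^b\times\Z_2^c$ is discrete and central. Let $E=\{0\}\times\{0\}\times\Z_2^c$ be the finite center of the $\SL(2,\C)^c$ factor; its image in $G$ is finite and central, so \eqref{eq:mod-compact} reduces the problem to $\tilde G/(ED)$. Since $ED\supseteq E$, I pass first to $\tilde G/E=\R^m\times\tilde\SL(2,\R)^b\times\PSL(2,\C)^c$; here $ED/E$ is a discrete subgroup of $Z(\tilde G/E)=\R^m\times\Z^b\times\{1\}$, hence lies in the first two factors, so
\[
\tilde G/(ED)\cong H\times\PSL(2,\C)^c,\qquad H:=(\R^m\times\tilde\SL(2,\R)^b)/(ED/E),
\]
with $H$ a connected Lie group locally isomorphic to $\R^m\times\SL(2,\R)^b$. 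All reductions so far preserve $\Lambda_\WA$, and since $\PSL(2,\C)\approx\SO(1,3)$ has real rank one, \eqref{eq:product} and Theorem~\ref{thm:simple} give $\Lambda_\WA(G)=\Lambda_\WA(H)\cdot\Lambda_\WA(\PSL(2,\C))^c=\Lambda_\WA(H)$. Finally, Lemma~\ref{lem:existence-of-lattice} yields a lattice $\Gamma$ in $H$; it is countable (being discrete in a second countable group), and since $H$ satisfies condition~(1) of Theorem~\ref{thm:discrete}, condition~(4) of that theorem gives $\Lambda_\WA(\Gamma)=1$, whence $\Lambda_\WA(H)=\Lambda_\WA(\Gamma)=1$ by \eqref{eq:lattice}. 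Therefore $\Lambda_\WA(G)=1$.

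I expect the main obstacle to be the one emphasized in Section~\ref{sec:prelim}: there is no non-discrete analogue of \eqref{eq:mod-central}, so one cannot directly replace $G$ by its universal cover or quotient out its (possibly infinite, non-compact) center, and the only available substitute is to pass to a lattice --- yet $G$ itself need not contain one. The reductions above are designed precisely so that, after discarding the compact $\SO(3)$-factors and the finite-center $\SL(2,\C)$-factors, what remains is an $\R^m\times\SL(2,\R)^b$-type group; the genuinely nontrivial input is that such groups always contain lattices, which is exactly why Lemma~\ref{lem:existence-of-lattice} is stated for the shape $\R^m\times\SL(2,\R)^n$ rather than merely $\SL(2,\R)^n$.
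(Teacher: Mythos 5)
Your proof is correct, and while it follows the paper's overall strategy---use the permanence properties to reduce to a connected group locally isomorphic to $\R^m\times\SL(2,\R)^b$, then invoke Lemma~\ref{lem:existence-of-lattice} and pass to a lattice---it differs from the paper's argument at three points. First, the paper splits off the finite-center factors by forming the analytic subgroup $S_0$ associated with $\mf{su}(2)^a\oplus\mf{sl}(2,\C)^c$ and using the finite-kernel multiplication map $H\times S_0\to G$ together with \eqref{eq:mod-compact}, \eqref{eq:product} and Theorem~\ref{thm:semisimple}; you instead pass at once to the closure $\bar S$ of the whole Levi factor via \eqref{eq:co-amenable}, kill the compact $\SO(3)^a$-part and the finite central image of the center of $\SL(2,\C)^c$ by \eqref{eq:mod-compact}, and split off $\PSL(2,\C)^c$ as a direct factor; this is equally valid. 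Second, where the paper appeals to Mostow's theorem \cite{MR0048464} to conclude that the radical of $\bar S$ is abelian and central, you use the elementary observation that $R$ centralizes $\bar S$, hence $\mathrm{Lie}(\bar S)\cap\mf r$ is central in the ideal $\mathrm{Lie}(\bar S)=(\mathrm{Lie}(\bar S)\cap\mf r)\oplus\mf s$; that is a genuine simplification. Third, in the endgame the paper lifts the lattice to the universal cover and applies \eqref{eq:mod-central} for discrete groups together with Hansen's theorem \cite{MR1079871} giving $\Lambda_\WA\bigl(\R^n\times\tilde\SL(2,\R)^b\bigr)=1$, whereas you quote the implication (1)$\Rightarrow$(4) of Theorem~\ref{thm:discrete} to get $\Lambda_\WA(\Gamma)=1$ for the (countable) lattice $\Gamma$ and finish with \eqref{eq:lattice}. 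This is legitimate: Theorem~\ref{thm:discrete} is proved before and independently of Theorem~\ref{thm:non-discrete}, so no circularity arises and Corollary~\ref{cor:question18} is unaffected; the trade-off is that your route imports the stronger input that $\SL(2,\R)_\dd$ is weakly amenable with constant $1$ (Theorem~\ref{thm:simple-discrete}, from \cite{MR3345044}), while the paper's route needs only Hansen's result on the topological group $\tilde\SL(2,\R)$ and keeps the use of \eqref{eq:mod-central} explicit rather than hidden inside Theorem~\ref{thm:discrete}. Your closing remarks correctly identify the crucial role of Lemma~\ref{lem:existence-of-lattice} and the unavailability of a non-discrete analogue of \eqref{eq:mod-central}.
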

\begin{proof}
The strategy of the proof is to reduce the problem to the case where $G$ is locally isomorphic to the group appearing in Lemma~\ref{lem:existence-of-lattice}. This is done in several steps. We first show how to get rid of the factors $\SO(3)$ and $\SL(2,\C)$. Then we show how to reduce the radical to an abelian group.

Let $\mf g$ be the Lie algebra of $G$. With $\mf r$ the solvable radical in $\mf g$, we have (recall $\mf{so}(3) = \mf{su}(2)$)
\[
\mf g = \mf r \oplus \mf{su}(2)^a \oplus \mf{sl}(2,\R)^b \oplus \mf{sl}(2,\C)^c.
\]
Set $\mf s_0 = \mf{su}(2)^a \oplus \mf{sl}(2,\C)^c \subseteq \mf g$ and let $S_0$ be the connected semisimple Lie subgroup of $G$ associated with $\mf s_0$. Note that the center $Z(S_0)$ of $S_0$ is finite, since the simply connected group $\SU(2)^a\times\SL(2,\C)^c$ has finite center.
%
%

Set $\mf h = \mf r \oplus \mf{sl}(2,\R)^b$ so that $\mf g = \mf h\oplus\mf s_0$, and let $H$ be the connected Lie subgroup of $G$ associated with $\mf h$. As $[\mf h,\mf s_0] = 0$, the subgroups $H$ and $S_0$ commute, and the multiplication map $\phi\colon H\times S_0\to G$ is a homomorphism. The image $\phi(H\times S_0)$ is a connected Lie subgroup of $G$ containing both $H$ and $S_0$. It follows that $\phi$ is surjective and $G\simeq (H\times S_0)/\ker\phi$.

The kernel $\ker\phi$ is precisely
\[
\ker\phi = \{(h,h^{-1}) \mid h\in H\cap S_0\}.
\]
Since $H$ and $S_0$ commute, the group $H\cap S_0$ is central in $S_0$ and hence finite. Then $\ker\phi$ is also a finite group. By \eqref{eq:mod-compact} and \eqref{eq:product} we have
\[
\Lambda_\WA(G) = \Lambda_\WA(H\times S_0) = \Lambda_\WA(H)\Lambda_\WA(S_0).
\]
Note that $\Lambda_\WA(S_0) = 1$ by Theorem~\ref{thm:semisimple}, since by Theorem~\ref{thm:simple} both $\SU(2)$ and $\SL(2,\C)$ are weakly amenable with constant $1$ (recall $\SL(2,\C) \approx \SO(1,3)$).

We have thus reduced the problem to the case where $G = H$ is locally isomorphic to $R \times \SL(2,\R)^b$. Let $G = RS$ be a Levi decomposition of $G$. Then the closure $\bar S$ of $S$ in $G$ is a closed connected normal subgroup of $G$ whose Lie algebra is a subalgebra of $\mf g$, and the quotient $G/\bar S$ is solvable. By \eqref{eq:co-amenable}, it suffices to prove that $\bar S$ is weakly amenable with constant 1. We may thus suppose that $S$ is a dense connected Lie subgroup of $G$.

When $S$ is dense, a theorem of Mostow \cite[\S~6]{MR0048464} shows that $G$ is of the form $G = SC$ where $C$ is a connected Lie subgroup of the center of $G$ and in the closure of the center of $S$. It follows that the solvable radical is abelian and $G$ is locally isomorphic to $\R^n\times \SL(2,\R)^b$ for some integer $n$.

If $G$ is simply connected, then $G = \R^n\times\tilde\SL(2,\R)^b$, where $\tilde\SL(2,\R)$ denotes the universal covering group of $\SL(2,\R)$. The fact that $\Lambda_\WA(G) = 1$ is basically \cite{MR1079871} (recall $\tilde\SL(2,\R) = \tilde\SU(1,1)$) together with the product formula \eqref{eq:product} (see also Theorem~\ref{thm:simple}).

The general case can then be deduced from the simply connected case as follows. Let $\tilde G$ be the universal covering group of $G$. By Lemma~\ref{lem:existence-of-lattice}, there is a lattice $\Gamma$ in $G$. Let $\tilde\Gamma$ be the preimage of $\Gamma$ in $\tilde G$ under the covering homomorphism $\tilde G\to G$. Then $\tilde\Gamma$ is a lattice in $\tilde G$, and $\tilde\Gamma$ is a central extension of $\Gamma$. By \eqref{eq:lattice} and \eqref{eq:mod-central} we have
\[
\Lambda_\WA(G) = \Lambda_\WA(\Gamma) \leq \Lambda_\WA(\tilde\Gamma) = \Lambda_\WA(\tilde G) = 1.
\]
This shows that $\Lambda_\WA(G) = 1$, and the proof is complete.
\end{proof}

\begin{cor}\label{cor:question18}
Let $G$ be a connected Lie group. If $G_\dd$ is weakly amenable, then $G$ is weakly amenable. In this case, $\Lambda_\WA(G_\dd) = \Lambda_\WA(G) = 1$.
\end{cor}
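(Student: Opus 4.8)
The plan is to combine the classification in Theorem~\ref{thm:discrete} with the non-discrete result in Theorem~\ref{thm:non-discrete}. First I would observe that the hypothesis that $G_\dd$ is weakly amenable is precisely condition~(3) of Theorem~\ref{thm:discrete}. By the equivalence of (1), (2), and (3) in that theorem, this forces two things simultaneously: on the one hand, $G$ must be locally isomorphic to $R\times\SO(3)^a\times\SL(2,\R)^b\times\SL(2,\C)^c$ for some solvable connected Lie group $R$ and non-negative integers $a,b,c$ (this is (1)); on the other hand, $G_\dd$ is weakly amenable \emph{with constant $1$} (this is (2)), so $\Lambda_\WA(G_\dd) = 1$.

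Next I would feed the structural conclusion (1) into Theorem~\ref{thm:non-discrete}. That theorem says exactly that any connected Lie group locally isomorphic to $R\times\SO(3)^a\times\SL(2,\R)^b\times\SL(2,\C)^c$ is weakly amenable with constant $1$, i.e.\ $\Lambda_\WA(G) = 1$. Combining the two statements gives $\Lambda_\WA(G_\dd) = \Lambda_\WA(G) = 1$, which is the assertion of the corollary. The argument is therefore essentially a one-line deduction once Theorems~\ref{thm:discrete} and~\ref{thm:non-discrete} are in hand.

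There is no real obstacle at this stage: all the work has been done upstream. The one point worth stating carefully is that the corollary genuinely needs both halves of the machinery — Theorem~\ref{thm:discrete} to extract the local-isomorphism type from the (a priori weaker) discrete weak amenability hypothesis, and Theorem~\ref{thm:non-discrete} to convert that structural information back into weak amenability of the topological group $G$. In particular, as the author already remarks, the proof is not direct: it routes through the full classification rather than producing a net of Herz--Schur multipliers on $G$ from one on $G_\dd$. It would be desirable to have such a direct argument, but I do not see one, and the classification-based proof above is the natural route given the results available.
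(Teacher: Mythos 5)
Your proposal is correct and is exactly the paper's argument: the corollary is deduced immediately by combining the equivalence of (1), (2), and (3) in Theorem~\ref{thm:discrete} with Theorem~\ref{thm:non-discrete}. Nothing further is needed.
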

\begin{proof}
This is immediate from Theorem~\ref{thm:discrete} and Theorem~\ref{thm:non-discrete}.
\end{proof}

\section*{Acknowledgements}
We would like to thank the anonymous referee for helpful suggestions and remarks.


\end{document}